\def\A{\mathcal{A}}
\def\N{\mathbb N}
\def\R{{\mathbb R}}
\def\Z{\mathbb Z}
\newcommand{\Id}{\operatorname{Id}\nolimits}
\newcommand{\intt}{\operatorname{int}\nolimits}
\newcommand{\arch}{\operatorname{arch}\nolimits}
\def\eps{\varepsilon}
\def\lam{\lambda}
\def\f{\varphi}
\def\om{\omega}
\def\const{\operatorname{const}}
\def\Exp{\operatorname{Exp}}
\def\Lip{\operatorname{Lip}}
\def\arcosh{\operatorname{arcosh}}
\renewcommand\arch{\operatorname{arcosh}}
\newcommand{\pder}[2]{\frac{\partial \, #1}{\partial \, #2} }
\newcommand{\restr}[2]{\left. #1 \right|_{#2}}
\newcommand{\map}[3]{#1 \, : \, #2 \to #3}
\newcommand{\eq}[1]{$(\protect\ref{#1})$}
\newcommand{\be}[1]{\begin{equation}\label{#1}}
\newcommand{\ee}{\end{equation}}
\theoremstyle{plain}
\newtheorem{theorem}{Theorem}
\newtheorem{proposition}{Proposition}
\newtheorem{corollary}{Corollary}
\newtheorem{lemma}{Lemma}
\theoremstyle{remark}
\newtheorem{remark}{Remark}
\newcommand{\onefiglabelsizen}[4]
{
\begin{figure}[htbp]
\begin{center}
\includegraphics[height=#4cm]{#1}
\\
\parbox[t]{0.7\textwidth}{\caption{#2}\label{#3}}
\end{center}
\end{figure}
}
\newcommand{\twofiglabelsize}[8]
{
\begin{figure}[htbp]
\includegraphics[height=#4\textwidth]{#1}
\hfill
\includegraphics[height=#8\textwidth]{#5}
\\
\hfill
\parbox[t]{0.45\textwidth}{\caption{#2}\label{#3}}
\hfill
\parbox[t]{0.45\textwidth}{\caption{#6}\label{#7}}
\hfill
\end{figure}
}
\begin{document}

\title{
Families of Lorentzian problems on the Heisenberg group
}

\author{Yu. Sachkov}
\date{}

\maketitle

\begin{abstract}

\end{abstract}

\tableofcontents

\section{Introduction}

\section{The first family of Lorentzian problems} \label{sec:first}
\subsection{Problem statement}

Let $M = \R^3_{x, y, z}$ be the Heisenberg group with the product rule   
\begin{align*}
\begin{pmatrix}
x_1\\
y_1\\
z_1
\end{pmatrix} \cdot
\begin{pmatrix}
x_2\\
y_2\\
z_2
\end{pmatrix} = 
\begin{pmatrix}
x_1 + x_2\\
y_1 + y_2\\
z_1 + z_2 + (x_1 y_2 - x_2 y_1)/2
\end{pmatrix}.
\end{align*}
The vector fields
$$
X_1 = \pder{}{x} - \frac{y}{2}\pder{}{z}, \qquad X_2 = \pder{}{y} + \frac{x}{2}\pder{}{z}, \qquad X_3 = \pder{}{z} 
$$
form a left-invariant frame on 
 $M$. The one-forms
$$
\om_1 = dx, \qquad \om_2 = dy, \qquad \om_3 = \frac y2 dx - \frac x2 dy + dz
$$
form the dual left-invariant coframe on $M$.

Fix a parameter $\eps> 0$ and consider the Lorentzian form $\displaystyle g = - \om_1^2 + \om_2^2 + \frac{\om_3^2}{\eps^2}$. An orthonormal frame for this form is given by the vector fields $X_1$, $X_2$, $\eps X_3$. Lorentzian length maximizers for the form $g$ are solutions to the following optimal control problem:
\begin{align}
&\dot q = u_1X_1 +u_2X_2+\eps u_3X_3, \qquad q \in M, \quad u \in U = \left\{u_1 \geq \sqrt{u_2^2+u_3^2}\right\}, \label{pr1}\\
&q(0) = q_0, \qquad q(t_1) = q_1, \label{pr2}\\
&l = \int_0^{t_1} \sqrt{u_1^2 - u_2^2 - u_3^2} dt \to \max.\label{pr3}
\end{align} 
The problem is left-invariant, thus we will assume that $q_0 = \Id = (0, 0, 0)$.

The aim of this section is to study the Lorentzian problem \eq{pr1}--\eq{pr3} for any $\eps > 0$ and, as a consequence, to study its limit behaviour for $\eps \to 0$. It is natural to expect that essential objects of the Lorentzian problem (attainable set, extremal trajectories, spheres) tend to  the corresponding objects of the sub-Lorentzian problem with the orthonormal frame $\{X_1, X_2\}$ (this sub-Lorentzian problem was studied in papers \cite{groch4, groch6, sl_heis}.

\subsection{Invariant set}
Denote by $\A$ (resp. by $\A(t_1)$) the attainable set of system \eq{pr1} from the point $q_0$ for arbitrary nonnegative time (resp. for time $t_1 > 0$). 

\begin{lemma}\label{lem:x|y|}
There holds the inclusion
$$
\A \subset \{q = (x, y, z) \in M \mid x \geq |y|\},
$$
thus system \eq{pr1}, \eq{pr2} is not globally controllable.
\end{lemma}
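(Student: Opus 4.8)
The plan is to show that the function $x - |y|$ is non-decreasing along every admissible trajectory, which, combined with the initial condition $q(0) = \Id = (0,0,0)$, forces $x \geq |y|$ on the whole attainable set $\A$. Writing the control system \eq{pr1} in coordinates, the first two components evolve by $\dot x = u_1$ and $\dot y = u_2$, while the control constraint $u \in U$ reads $u_1 \geq \sqrt{u_2^2 + u_3^2} \geq |u_2|$. In particular $u_1 \geq 0$ and $u_1 \geq |u_2|$ pointwise along any admissible control.

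Next I would compute the derivative of $x(t) - |y(t)|$. At points where $y \neq 0$ this derivative equals $\dot x - \sgn(y)\dot y = u_1 - \sgn(y) u_2 \geq u_1 - |u_2| \geq 0$, so $x - |y|$ is non-decreasing there. The only care needed is at the (measure-zero in the relevant sense) instants where $y(t) = 0$; since $|y|$ is Lipschitz and $t \mapsto x(t) - |y(t)|$ is absolutely continuous, it suffices that its a.e.\ derivative is nonnegative, which the computation above gives wherever $y \neq 0$, and at crossings $y = 0$ the one-sided behaviour is controlled by $|\dot y| = |u_2| \leq u_1 = \dot x$. Hence $x(t) - |y(t)|$ is monotone non-decreasing and, starting from the value $0$ at $t = 0$, stays $\geq 0$ for all $t \geq 0$, giving $\A \subset \{x \geq |y|\}$.

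Finally, the failure of global controllability is an immediate consequence: the attainable set is confined to the proper closed half-space-type cone $\{x \geq |y|\}$, which is not all of $M$, so the system cannot be steered from $q_0$ to arbitrary points of $M$.

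The main obstacle I anticipate is handling the non-smoothness of $|y|$ cleanly. Rather than differentiating $|y|$ directly, the robust argument is to treat the two sectors $\{y > 0\}$ and $\{y < 0\}$ separately via the smooth functions $x - y$ and $x + y$: along the trajectory one has $\frac{d}{dt}(x - y) = u_1 - u_2 \geq 0$ and $\frac{d}{dt}(x + y) = u_1 + u_2 \geq 0$, since $|u_2| \leq u_1$. Both $x - y$ and $x + y$ are therefore non-decreasing and start at $0$, so $x - y \geq 0$ and $x + y \geq 0$ throughout, which together are exactly $x \geq |y|$. This reformulation sidesteps the differentiability issue entirely and is the form in which I would present the proof.
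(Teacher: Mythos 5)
Your proof is correct and follows essentially the same route as the paper: both arguments reduce to the pointwise bound $|u_2|\leq u_1$ from the control set $U$ and integrate it from $q_0=(0,0,0)$. Your reformulation via the monotone functions $x-y$ and $x+y$ is in fact a cleaner rendering of the paper's one-line argument $\left|\frac{dy}{dx}\right|=\frac{|u_2|}{u_1}\leq 1$, since it avoids both the division by $u_1$ (problematic when $u_1=0$) and the non-smoothness of $|y|$.
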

\begin{proof}
It follows from system \eq{pr1} that
$$
\dot x = y_1, \qquad \dot y = u_2, \qquad u_1 \geq u_2,
$$
whence $|\frac{dy}{dx}| = \frac{|u_2|}{u_1} \leq 1$, so $x(t) \geq |y(t)|$.
\end{proof}

In order to construct an invariant set (which will turn out to be the attainable set $\A$) we apply the Pontryagin maximum principle in the geometric form. 

Let $T^*M$ be the cotangent bundle of the group $M$. Denote the linear on leaves of this bundle Hamiltonians $h_i(\lam) = \langle \lam, X_i\rangle$, $i = 1, 2, 3.$ Let $h_u = u_1h_1+u_2h_2+\eps u_3h_3$ be the Hamiltonian of the Pontryagin maximum principle in the geometric form.

\begin{theorem}[\cite{notes}]
Let $q(t)$, $t \in [0, t_1]$, be a trajectory of system \eq{pr1}, \eq{pr2} that satisfies the inclusion $q(t_1) \in \partial \A(t_1)$ and corresponds to a control $u(t)$, then there exists a curve $\lambda \in \Lip([0, t_1], T^*M)$, $\lambda_t \in T_{q(t)}^*M$, for which the following conditions hold for a.e. $t\in[0, t_1]$:
\begin{align}
&\dot \lambda_t = \vec{h}_{u(t)}(\lambda_t), \label{Ham}\\
&h_{u(t)}(\lambda_t) = \max_{v \in U} h_v(\lambda_t), \label{max}\\
&\lam_t \neq 0, \label{nontriv}
\end{align}
where $\vec h_u$ is the Hamiltonian vector field on $T^*M$ corresponding to the Hamiltonian $h_u$.
\end{theorem}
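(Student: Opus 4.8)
The statement is the geometric Pontryagin maximum principle for trajectories reaching the boundary of the time-$t_1$ attainable set, so the plan is to reconstruct the standard needle-variation argument in the symplectic setting on $T^*M$. Write the control system \eq{pr1} as $\dot q = f_u(q)$ with $f_u = u_1X_1 + u_2X_2 + \eps u_3X_3$. First I would fix a Lebesgue point $\tau \in (0,t_1)$ of the control $u$ and, for each admissible $v \in U$, form the classical needle variation $u^{\tau,\delta,v}$ that replaces $u$ by the constant $v$ on $[\tau-\delta,\tau]$ and leaves it unchanged elsewhere. Differentiating the endpoint of the perturbed trajectory in $\delta$ at $\delta = 0^+$ produces the tangent vector obtained by transporting the control variation $f_v(q(\tau)) - f_{u(\tau)}(q(\tau))$ to the terminal point $q(t_1)$ via the pushforward $(P_\tau^{t_1})_*$ of the reference flow, where $P_\tau^{t_1}$ denotes the flow of $\dot q = f_{u(t)}(q)$ from time $\tau$ to time $t_1$. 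Collecting all such vectors, together with the terminal velocity $\dot q(t_1)$ coming from time shifts, and taking their convex conical hull yields a cone $K \subset T_{q(t_1)}M$ contained in the tangent cone of $\A(t_1)$ at $q(t_1)$.

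The second step is the separation argument. Because $q(t_1) \in \partial\A(t_1)$, the cone $K$ cannot coincide with the whole tangent space $T_{q(t_1)}M$: if it did, an open-mapping argument applied to a finite family of simultaneous needle variations would force $q(t_1)$ to be an interior point of $\A(t_1)$, contradicting the boundary hypothesis. Hence $K$ is a proper convex cone, and by the Hahn--Banach separation theorem there exists a nonzero covector $\lambda_{t_1} \in T^*_{q(t_1)}M$ with $\langle \lambda_{t_1}, w\rangle \le 0$ for every $w \in K$.

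I would then propagate this covector backward in time by the cotangent pullback along the flow, setting $\lambda_t = (P_t^{t_1})^*\lambda_{t_1} \in T^*_{q(t)}M$. The standard identity relating the flow of the control system on $M$ to its Hamiltonian lift on $T^*M$ shows that the resulting Lipschitz curve satisfies the Hamiltonian equation \eq{Ham}, $\dot\lambda_t = \vec h_{u(t)}(\lambda_t)$, for a.e.\ $t$. Evaluating the separation inequality on the needle-variation vectors and transporting it back to time $\tau$ turns it into $h_v(\lambda_\tau) \le h_{u(\tau)}(\lambda_\tau)$ for every $v \in U$, which is precisely the maximization condition \eq{max}; since the Lebesgue point $\tau$ was arbitrary, this holds for a.e.\ $t$. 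Nontriviality \eq{nontriv} is then immediate, since $P_t^{t_1}$ is a diffeomorphism, so its cotangent pullback is a fiberwise linear isomorphism carrying the nonzero $\lambda_{t_1}$ to a nonzero $\lambda_t$.

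The main obstacle I anticipate is the fullness-implies-interiority step inside the separation argument: making precise that a full-dimensional cone of attainable directions forces $q(t_1)$ into the interior requires a careful topological (fixed-point or degree) argument with several needles activated at once and uniform control of the $O(\delta^2)$ remainders, and this is the technical heart of every PMP proof. By comparison the measurable time dependence of $u$ and the passage to a.e.\ statements via Lebesgue points is routine, as is the symplectic bookkeeping identifying the adjoint transport with $\vec h_{u(t)}$. Since the result is quoted from \cite{notes}, in the paper itself one would simply invoke that reference, but the scheme above is the proof it encapsulates.
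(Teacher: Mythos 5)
The paper does not prove this theorem at all: it is quoted verbatim from the reference \cite{notes} (Agrachev--Sachkov), and the paper's ``proof'' is the citation itself. Your reconstruction --- needle variations at Lebesgue points, transport of the variation vectors $f_v(q(\tau))-f_{u(\tau)}(q(\tau))$ to the endpoint by the flow, the convex cone $K$, separation of a proper cone by a nonzero covector, and backward propagation of that covector identified with the flow of $\vec h_{u(t)}$ --- is exactly the proof scheme in that reference, so your sketch is faithful to what the paper implicitly relies on, and you correctly isolate the genuinely hard step (a full cone of first-order attainable directions forces interiority, via an open-mapping/fixed-point argument with several simultaneous needles).

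One caveat: since the theorem concerns the \emph{fixed-time} attainable set $\A(t_1)$, the cone $K$ should be generated by needle variations only, because these preserve the total time; the forward time-shift vector $+\dot q(t_1)$ that you add does \emph{not} in general point into $\A(t_1)$, and with it in $K$ the implication ``$K$ full $\Rightarrow q(t_1)\in\intt\A(t_1)$'' fails (fullness would only place $q(t_1)$ in the interior of the free-time attainable set, which is compatible with $q(t_1)\in\partial\A(t_1)$). Note that the backward shift $-\dot q(t_1)$ is already a needle with $v=0\in U$, so nothing is lost by dropping the time shifts. For this particular system the slip is repairable --- $U$ is a cone containing $0$, so any covector separating the needle cone automatically satisfies $h_{u(t_1)}(\lambda_{t_1})=0$ and hence also annihilates $\dot q(t_1)$ --- but in a clean write-up the time-shift generators should simply be omitted.
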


The maximality condition \eq{max} implies that
\be{h_1}
h_1 = - \sqrt{h_2^2 + \eps^2 h_3^2}, \qquad (u_1, u_2, u_3) = (-h_1, h_2, \eps h_3),
\ee
and the vertical subsystem of the Hamiltonian system \eq{Ham} (the subsystem for adjoint variables) reads
$$
\dot h_1 = - u_2 h_3, \qquad \dot h_2 = u_1 h_3, \qquad \dot h_3 = 0.
$$
Then, in view of the second equality in \eq{h_1}, we get the Hamiltonian system
\begin{align*}
&\dot h_1 = - h_2h_3, \qquad \dot h_2 = - h_1h_3, \qquad \dot h_3 = 0, \\
&\dot x = - h_1, \qquad \dot y = h_2, \qquad \dot z = h_1\frac y2 + h_2 \frac x2 + h_3^2\eps. 
\end{align*}
If $h_3 = 0$, then $h_1, h_2 \equiv \const$ and
$$
x = - h_1 t, \qquad y = h_2 t, \qquad z = 0.
$$
Let $h_3 \neq 0$, then
\begin{align}
&h_1 = h_1^0 \cosh \tau - h_2^0\sinh \tau, \qquad h_2 = h_2^0 \cosh\tau-h_1^0\sinh\tau, \qquad \tau = h_3 t, \nonumber\\
&x = (-h_1^0\sinh\tau +h_2^0(\cosh\tau-1))/h_3, \label{x}\\
&y = (h_2^0\sinh\tau - h_1^0(\cosh\tau-1))/h_3, \label{y}\\
&z = \eps^2(\sinh \tau + \tau)/2.
\end{align}
Immediate computation on the basis of \eq{x}, \eq{y} gives
$$
\frac{x^2-y^2}{2\eps^2}+1=\cosh\tau,
$$
thus 
$$
\tau=\pm\arcosh\left(\frac{x^2-y^2}{2\eps^2}+1\right).
$$
With account of Lemma \ref{lem:x|y|} we proved the following
\begin{proposition}
If a trajectory $q(t)$, $t \in [0, t_1]$, comes to $\partial \A(t_1)$ at a point $q = (x, y, z)$, then 
\begin{multline}\label{|z|}
|z| = \frac{\eps^2}{2}(\sinh \tau+\tau), \qquad x \geq |y|, \qquad 
\tau=\arcosh\left(\frac{x^2-y^2}{2\eps^2}+1\right).
\end{multline}
\end{proposition}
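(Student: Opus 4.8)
The plan is to feed the boundary trajectory into the Pontryagin maximum principle that the excerpt has already prepared and then read off the claimed relations from the integrated equations of motion. Because $q(t_1)\in\partial\A(t_1)$, the Theorem supplies an adjoint curve $\lambda_t$ satisfying \eqref{Ham}--\eqref{nontriv}; the maximality condition \eqref{max} forces the controls through \eqref{h_1}, and substituting them into the vertical subsystem closes the Hamiltonian system written just above. I would integrate this system explicitly, which splits the analysis into the cases $h_3=0$ and $h_3\neq0$ displayed in the text.

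The substantive case is $h_3\neq0$, where \eqref{x} and \eqref{y} give $x$ and $y$ in closed form. The key step is to compute $x^2-y^2$ by factoring it as $(x-y)(x+y)$. Using $\cosh\tau-\sinh\tau=e^{-\tau}$ and $\cosh\tau+\sinh\tau=e^{\tau}$, the two combinations simplify to $x-y=(h_1^0+h_2^0)(e^{-\tau}-1)/h_3$ and $x+y=(h_2^0-h_1^0)(e^{\tau}-1)/h_3$, whence $x^2-y^2=((h_2^0)^2-(h_1^0)^2)(e^{-\tau}-1)(e^{\tau}-1)/h_3^2$. Two simplifications now combine: the product $(e^{-\tau}-1)(e^{\tau}-1)$ reduces to $-2(\cosh\tau-1)$, and the constraint $h_1=-\sqrt{h_2^2+\eps^2 h_3^2}$ from \eqref{h_1}, evaluated at $\tau=0$, yields $(h_1^0)^2-(h_2^0)^2=\eps^2 h_3^2$. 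These exactly cancel the factor $h_3^2$ and leave $x^2-y^2=2\eps^2(\cosh\tau-1)$, i.e. $\cosh\tau=(x^2-y^2)/(2\eps^2)+1$, the identity quoted before the statement.

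It then remains to pin down the sign conventions. The expression $\sinh\tau+\tau$ is odd in $\tau$, so the formula $z=\eps^2(\sinh\tau+\tau)/2$ gives $|z|=\tfrac{\eps^2}{2}(\sinh|\tau|+|\tau|)$; choosing the nonnegative branch $\tau=\arcosh\!\left((x^2-y^2)/(2\eps^2)+1\right)\geq0$ then reproduces $|z|=\tfrac{\eps^2}{2}(\sinh\tau+\tau)$. The degenerate case $h_3=0$ is a one-line consistency check: there $x^2=y^2$ and $z=0$, so $\tau=\arcosh(1)=0$ and both sides of the $z$-formula vanish. The remaining inequality $x\geq|y|$ is not reproved here but imported verbatim from Lemma \ref{lem:x|y|}, which applies because $\partial\A(t_1)\subset\A$.

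The main obstacle is the $x^2-y^2$ computation: the formula closes only because the maximality constraint forces $(h_1^0)^2-(h_2^0)^2=\eps^2 h_3^2$, and one must recognise that this is precisely the relation needed to eliminate the free initial covector $(h_1^0,h_2^0)$ and the parameter $h_3$ in favour of the geometric quantity $x^2-y^2$. Everything else is bookkeeping about the branch of $\arcosh$ and the degenerate limit.
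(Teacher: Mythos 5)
Your proposal is correct and follows essentially the same route as the paper: apply the geometric Pontryagin maximum principle, integrate the resulting Hamiltonian system in the cases $h_3=0$ and $h_3\neq 0$, derive $\cosh\tau = \frac{x^2-y^2}{2\eps^2}+1$, and import $x\geq|y|$ from Lemma \ref{lem:x|y|}. Your explicit factoring of $x^2-y^2=(x-y)(x+y)$ together with the conserved relation $(h_1^0)^2-(h_2^0)^2=\eps^2h_3^2$ simply fills in what the paper dismisses as ``immediate computation,'' and your sign bookkeeping for the $\arcosh$ branch matches the paper's passage from $z$ to $|z|$.
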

Denote the surface obtained:
\begin{multline*}
S = \left\{q = (x, y, z) \in M \mid  x \geq |y|, \ |z| = \frac{\eps^2}{2}(\sinh \tau+\tau), \right.\\
\left. \tau=\arcosh\left(\frac{x^2-y^2}{2\eps^2}+1\right) \right\},
\end{multline*}
and a domain in $M$ bounded by it:
\begin{multline*}
D = \left\{q = (x, y, z) \in M \mid  x \geq |y|, \ |z| \leq \frac{\eps^2}{2}(\sinh \tau+\tau), \right.\\
\left. \tau=\arcosh\left(\frac{x^2-y^2}{2\eps^2}+1\right) \right\}.
\end{multline*}

\begin{proposition}\label{prop:invar}
\begin{itemize}
\item[$(1)$]
The domain $D$ is an invariant set of system \eq{pr1}.
\item[$(2)$]
$D \supset \A$.
\end{itemize}
\end{proposition}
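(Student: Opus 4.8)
The plan is to derive part $(2)$ from part $(1)$ and to concentrate the real work on the invariance statement. Indeed, the base point $q_0 = \Id$ lies in $D$: there $x=y=z=0$, so $\tau = \arcosh 1 = 0$ and the constraint $|z| \le \frac{\eps^2}{2}(\sinh\tau+\tau) = 0$ holds with equality. Once we know $D$ is invariant under the admissible dynamics, every point reachable from $q_0$ remains in $D$, which gives $\A \subset D$. Thus it suffices to prove $(1)$.

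To prove invariance I would treat the two parts of $\partial D$ separately. The lateral boundary $\{x = |y|\}$ is handled exactly as in Lemma \ref{lem:x|y|}: since $\dot x = u_1 \ge \sqrt{u_2^2+u_3^2} \ge |u_2| = |\dot y|$, the region $\{x \ge |y|\}$ is forward-invariant, so a trajectory starting in $D$ never crosses $\{x=|y|\}$ outward. It remains to show that the horizontal boundary $z = \pm\zeta(x,y)$, where $\zeta(x,y) = \frac{\eps^2}{2}(\sinh\tau+\tau)$ and $\tau = \arcosh\!\left(\frac{x^2-y^2}{2\eps^2}+1\right)$, cannot be crossed from inside to outside. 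Working on the upper sheet $z = \zeta > 0$, I set $\Phi = z - \zeta(x,y)$ (so $D = \{\Phi \le 0\}$ locally and $S = \{\Phi = 0\}$) and compute $\dot\Phi$ along the flow from $\dot x = u_1$, $\dot y = u_2$, $\dot z = -\frac y2 u_1 + \frac x2 u_2 + \eps u_3$ together with the chain rule $\dot\tau = (u_1 x - u_2 y)/(\eps^2\sinh\tau)$. This yields $\dot\Phi = A u_1 + B u_2 + C u_3$ with $A = -\frac12(y+kx)$, $B = \frac12(x+ky)$, $C = \eps$, where $k = \coth(\tau/2) = (\cosh\tau+1)/\sinh\tau$.

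The key point is then a convex-duality observation: for the control cone $U$ the linear functional $u \mapsto A u_1 + B u_2 + C u_3$ is nonpositive on all of $U$ if and only if the covector lies in the polar (past) cone, i.e. $A \le 0$ and $A^2 \ge B^2 + C^2$; one direction is Cauchy–Schwarz, $A u_1 + B u_2 + C u_3 \le (A + \sqrt{B^2+C^2})\,u_1 \le 0$, using $\sqrt{u_2^2+u_3^2} \le u_1$. So invariance on the upper sheet reduces to checking $A < 0$ and $A^2 - B^2 - C^2 \ge 0$. A direct computation gives $A^2 - B^2 = \frac14(k^2-1)(x^2-y^2)$; substituting $k^2 - 1 = 1/\sinh^2(\tau/2)$ and $x^2 - y^2 = 2\eps^2(\cosh\tau - 1) = 4\eps^2\sinh^2(\tau/2)$ collapses this to $A^2 - B^2 = \eps^2 = C^2$, so in fact $A^2 - B^2 - C^2 = 0$ identically, while $A = -\frac12(y+kx) < 0$ since $kx \ge x > |y| \ge -y$ on the interior of this sheet. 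Hence $\dot\Phi \le 0$ on $S$, with equality realized by the extremal controls — consistent with $S$ being swept out by the boundary trajectories found above. The lower sheet $z = -\zeta$ is identical under the symmetry $(z,u_3)\mapsto(-z,-u_3)$.

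The only delicate point is the edge $\{x = |y|,\ z = 0\}$, where $\tau \to 0$, the normal to $S$ degenerates, and $\Phi$ fails to be smooth (division by $\sinh\tau$, and the corner of $|z|$). There I would invoke the Nagumo subtangentiality criterion for the closed set $D$ in its tangent-cone form rather than the gradient form, or pass to the limit $\tau \to 0^+$ in the estimate above; the interior computation already shows the admissible velocities are subtangent on the dense smooth part, and continuity of the dynamics extends this to the edge. Combining forward-invariance of $\{x \ge |y|\}$ with $\dot\Phi \le 0$ on both sheets shows that $|z| \le \zeta$ is preserved, so $D$ is invariant, proving $(1)$; then $\A \subset D$ follows as above, giving $(2)$. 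I expect this edge degeneracy to be the main technical obstacle, the interior estimate itself being the clean identity $A^2 - B^2 - C^2 = 0$.
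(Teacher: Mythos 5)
Your proposal is correct and takes essentially the same route as the paper: your function $\Phi = z - \zeta$ has gradient equal to the paper's exterior normal $n$, so your $\dot\Phi = Au_1 + Bu_2 + Cu_3$ is exactly the paper's $n\cdot\dot q$, your identity $A^2 - B^2 - C^2 = 0$ is the paper's relation $v_1^2 = v_2^2 + v_3^2$ (misprinted there as $v_1 = \sqrt{v_1^2+v_2^2}$), and the concluding Cauchy--Schwarz estimate against the control cone is the same. The only substantive difference is that you explicitly handle the lateral invariance of $\{x \ge |y|\}$ and the non-smooth edge $\{x = |y|,\ z = 0\}$ via Nagumo's tangent-cone criterion, points the paper passes over silently; this is a refinement of rigor rather than a different method.
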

\begin{proof}
(1) Let us prove that if a trajectory $q(t)$ of system \eq{pr1} satisfies the inclusion $q(0) \in D$, then $q(t) \in D$ for all $t > 0$.
To this end we show that the scalar product $\restr{n \cdot \dot q}{S} \leq 0$, where $n$ is the exterior normal vector to the set $D$. 

By virtue of the symmetry $z \mapsto -z$, we may consider only the points of the surface $S_+ = \{q \in S \mid z \geq 0\}$. The exterior normal vector to this surface is
$$
n = \left(-\frac x2 \frac{\cosh\tau+1}{\sinh\tau}, \frac y2 \frac{\cosh\tau+1}{\sinh\tau}, 1\right).
$$
Thus
\begin{align*}
2 n \cdot \dot q &= \left(-\frac x2 \frac{\cosh\tau+1}{\sinh\tau}, \frac y2 \frac{\cosh\tau+1}{\sinh\tau}, 1\right)\cdot
\left(u_1, u_2, - \frac y2 u_1 + \frac x2 u_2 + \eps u_3\right) \\
&= - u_1\left(x \frac{\cosh\tau+1}{\sinh\tau}+y\right) + u_2 \left(y \frac{\cosh\tau+1}{\sinh\tau} + x\right) + 2 \eps u_3 \\
&= (-u_1, u_2, u_3) \cdot(v_1, v_2, v_3), \\
(v_1, v_2, v_3) &= 
\left( 
x \frac{\cosh\tau+1}{\sinh\tau}+y, 
y \frac{\cosh\tau+1}{\sinh\tau} + x, 
2 \eps\right).
\end{align*}
In view of the conditions $-u_1 \leq - \sqrt{u_2^2+u_3^2}$, $v _1 = \sqrt{v_1^2+v_2^2}$, we have
$$
2 n\cdot\dot q = (u_1, u_2, u_3)\cdot (v_1, v_2, v_3) \leq 0.
$$
So at the surface $S_+$ the vector $\dot q$ is directed inside of the domain $D$ or is tangent to its boundary $S$. Thus this domain is an invariant set of system \eq{pr1}.

(2) We have $q_0 = (0, 0, 0) \in D$, thus with account of item (1) we get $\A \subset D$.
\end{proof}

The surface $S$ for $\eps=1$ is shown in Fig. \ref{fig:S}. In Sec. \ref{subsec:attain} we prove the equality $\A = D$.

\onefiglabelsizen{S}{The surface $S$ for $\eps=1$}{fig:S}{5}

\subsection{Extremal trajectories}
Now we apply the Pontryagin maximum principle \cite{PBGM, notes} to problem \eq{pr1}--\eq{pr3}.

Abnormal trajectories are lightlike and fill the surface $S$.

Normal arclength parametrized extremals are trajectories of the Hamiltonian vector field $\vec H$ with the Hamiltonian $H = \frac 12(-h_1^2+h_2^2+\eps^2 h_3^2)$ lying in the level surface $\{H = - \frac 12\}$. The Hamiltonian system $\dot \lambda = \vec H(\lambda)$, $\lambda \in \{H = - \frac 12\}$, has the form
\begin{align*}
&\dot h_1 = - h_2 h_3, \qquad \dot h_2 = - h_1 h_3, \qquad \dot h_3 = 0, \\
&\dot x = -h_1, \qquad \dot y = h_2, \qquad \dot z = h_1 \frac y2 + h_2 \frac x2 + \eps^2 h_3.
\end{align*}
If $h_3 = 0$, then 
\be{xyz0}
x = - h_1t, \qquad y = h_2t, \qquad z = 0.
\ee
If $h_3 \neq 0$, then
\begin{align}
&h_1 = h_1^0 \cosh \tau-h_2^0\sinh\tau, \qquad h_2=h_2^0\cosh\tau-h_1^0\sinh\tau, \qquad \tau = h_3 t, \nonumber\\
&x = (-h_1^0 \sinh \tau+h_2^0(\cosh\tau-1))/h_3, \nonumber\\
&y = (h_2^0\sinh\tau- h_1^0(\cosh\tau-1))/h_3, \nonumber\\
&z = \frac{1}{2h_3^2}(\sinh\tau-\tau)+\frac{\eps^2}{2}(\sinh\tau+\tau). \label{zt}
\end{align}

Let us parametrize the level surface $\{H = - \frac 12\}$:
\begin{align*}
&h_1 = - \cosh \theta, \qquad h_2 = \sinh \theta \cos \f, \qquad \eps h_3 = \sinh \theta \sin \f, \\
&\theta \in \R, \quad \f \in \R/(2 \pi\Z).
\end{align*}
Then all normal extremal trajectories are parametrized by the exponential mapping
\begin{align*}
&\map{\Exp}{\left(T_{q_0}^*M \cap \left\{H = - \frac 12\right\}\right)\times \R_{+t}}{M}, \\
&\Exp(\theta, \f, t) = (x, y, z).
\end{align*} 

\subsection{Conjugate points}

\begin{proposition}\label{prop:conj}
Problem \eq{pr1}--\eq{pr3} has no conjugate points in the domain $\{h_3 \neq 0\}$.
\end{proposition}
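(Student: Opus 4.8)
The plan is to characterize conjugate points as the critical values of the exponential mapping and then to show that the associated Jacobian never vanishes for $t>0$ in the region $\{h_3\neq 0\}$. Recall that a point $\Exp(\theta,\f,t)$ is conjugate to $q_0$ along the extremal determined by $(\theta,\f)$ precisely when $d\,\Exp$ is degenerate there, i.e. when
$$
J(\theta,\f,t)=\det\frac{\partial(x,y,z)}{\partial(\theta,\f,t)}=0 .
$$
Here the domain $\left(T_{q_0}^*M\cap\{H=-\tfrac12\}\right)\times\R_{+t}$ and the target $M$ are both three-dimensional, so $J$ is a genuine $3\times3$ determinant. On $\{h_3\neq 0\}$ the mapping $\Exp$ is given by the explicit real-analytic formulas \eqref{x}, \eqref{y}, \eqref{zt} with $h_1^0=-\cosh\theta$, $h_2^0=\sinh\theta\cos\f$, $\eps h_3=\sinh\theta\sin\f$. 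Thus it suffices to compute $J$ from these formulas and prove that $J\neq 0$ whenever $t>0$ and $\sinh\theta\sin\f\neq 0$ (equivalently $\tau=h_3t\neq0$).

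First I would pass to coordinates adapted to the flow. Since $\tau=h_3t$ and $h_3$ is constant along each extremal, I would differentiate \eqref{x}, \eqref{y}, \eqref{zt} treating the initial momenta $(h_1^0,h_2^0,h_3)$ and $\tau$ as intermediate variables and then account for the smooth reparametrization $(\theta,\f)\mapsto(h_1^0,h_2^0,h_3)$ on the level surface $\{H=-\tfrac12\}$; one checks separately that this reparametrization is nondegenerate on $\{h_3\neq0\}$ (where $\theta\neq0$), so that the vanishing of $J$ is equivalent to the vanishing of the determinant computed in momentum coordinates. The column $\partial_t(x,y,z)=\dot q=(-h_1,h_2,h_1\frac y2+h_2\frac x2+\eps^2 h_3)$ furnishes one column, while $\partial_\theta$ and $\partial_\f$ furnish the other two; all entries are combinations of $\cosh\tau$, $\sinh\tau$ and the linear term $\tau$, with coefficients built from $h_1^0,h_2^0$ and powers of $1/h_3$.

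The decisive step is the algebraic simplification of the resulting determinant. The aim is to factor it as a product of an elementary, manifestly nonvanishing prefactor (powers of $1/h_3$, a factor $\sinh\theta$, and the Jacobian of the reparametrization) times a \emph{core} function depending only on $\tau$ through $\sinh\tau$, $\cosh\tau$ and $\tau$. This is exactly the point at which the Lorentzian signature is used: in the Riemannian / sub-Riemannian analogue the same computation yields a core expression in $\sin\tau$, $\cos\tau$, which has infinitely many positive zeros — the conjugate times; here the hyperbolic core is strictly monotone/convex, and by elementary inequalities such as $\sinh\tau>\tau$ and $\tau\cosh\tau>\sinh\tau$ for $\tau>0$ (together with oddness for $\tau<0$) it has no zero away from $\tau=0$. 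Since $t>0$ and $h_3\neq0$ force $\tau\neq0$, this gives $J\neq0$ and hence the absence of conjugate points in $\{h_3\neq0\}$.

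The main obstacle is purely the bookkeeping of the $3\times3$ determinant: the entries are long, and one must organize the row and column operations so that the common factors cancel and the hyperbolic core emerges in a form whose sign is transparent. I would lighten this by performing column operations tied to the conserved quantities — for instance keeping $\dot q$ as one column and subtracting from the others suitable multiples of it — and by certifying the sign of each hyperbolic factor through the inequalities above rather than by locating its zeros explicitly. Verifying nondegeneracy of the auxiliary map $(\theta,\f)\mapsto(h_2^0,h_3)$ on $\{h_3\neq0\}$ is a minor but necessary preliminary.
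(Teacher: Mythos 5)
Your proposal is correct and follows essentially the same route as the paper: compute the Jacobian $J = \frac{\partial(x,y,z)}{\partial(t,\theta,\f)}$ explicitly from the formulas for the extremals in the coordinates $(\theta,\f,t)$, factor out a manifestly nonvanishing prefactor, and certify by elementary hyperbolic inequalities that what remains cannot vanish for $\tau = h_3 t \neq 0$. The one structural point to adjust is that the remaining factor is not a function of $\tau$ alone: the paper obtains
$\tau\sinh\tau + \bigl(2-\cosh\tau+\tau\sinh\tau\bigr)/\bigl(\sin^2\f\,\sinh^2\theta\bigr)$,
a combination with a $(\theta,\f)$-dependent positive weight of two functions of $\tau$ that are each positive for $\tau\neq0$ (the second has derivative $\tau\cosh\tau$ and value $1$ at $\tau=0$), so your fallback of certifying the sign of each hyperbolic term separately, rather than relying on a single core in $\tau$, is exactly what closes the argument.
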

\begin{proof}
In the domain $\left\{h_3 = \frac{\sinh \theta \sin \f}{\eps} \neq 0\right\}$ we have
$$
J = 
\frac{\partial(x, y, z)}{\partial(t, \theta, \f)} = 
\frac{\eps^4}{\sin^3 \f \sinh^2 \theta} 
\left(\tau \sin \tau + \frac{2 - \cosh \tau + \tau \sinh \tau}{\sin^2 \f\sinh^2\theta}\right).
$$
When $\tau \neq 0$ the function $2 - \cosh \tau + \tau \sinh \tau$ is positive, thus $J \neq 0 $ for $h_3 \neq 0$. Consequently, there are no conjugate points for $h_3 \neq 0$.
\end{proof} 

\begin{corollary}
For $h_3 \neq 0$ extremal trajectories $\Exp(\theta, \f, t)$ are locally optimal.
\end{corollary}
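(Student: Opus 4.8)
This corollary follows directly from Proposition \ref{prop:conj}, so the plan is to make explicit the link between the absence of conjugate points and local optimality, using the standard second-order theory for normal extremals in the geometric approach to optimal control. Recall that a \emph{conjugate point} along a normal extremal is, by definition, a point at which the exponential mapping $\Exp$ ceases to be a local diffeomorphism, equivalently a point where its Jacobian $J = \partial(x,y,z)/\partial(t,\theta,\f)$ vanishes. The general principle then asserts that a normal extremal trajectory remains locally optimal on any time interval containing no conjugate point in its interior; in the present Lorentzian (maximization) setting this is local length-maximality among nearby admissible curves with the same endpoints.

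By Proposition \ref{prop:conj}, for $h_3 \neq 0$ the Jacobian $J$ is nowhere zero, so no conjugate points occur along these extremals. Hence $\Exp$ is a local diffeomorphism in a neighbourhood of every $(\theta,\f,t)$ with $h_3 \neq 0$, and the cited optimality criterion yields local optimality of $\Exp(\theta,\f,t)$ for all $t$.

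The argument introduces no new computation; its only delicate point is confirming that the hypotheses of the conjugate-point optimality theorem are satisfied for this problem. This reduces to checking the regularity of the Hamiltonian $H$ and the non-degeneracy of the parametrization $h_1 = -\cosh\theta$, $h_2 = \sinh\theta\cos\f$, $\eps h_3 = \sinh\theta\sin\f$ of the level surface $\{H = -\tfrac12\}$, both of which hold in $\{h_3 \neq 0\}$, where the map $(\theta,\f) \mapsto (h_1,h_2,h_3)$ is an immersion. I expect no genuine obstacle: the entire substance of the corollary is already contained in the non-vanishing of $J$ established in the proof of Proposition \ref{prop:conj}.
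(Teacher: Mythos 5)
Your proposal is correct and matches the paper's treatment: the paper offers no separate argument for this corollary, treating it exactly as you do --- an immediate consequence of Proposition \ref{prop:conj} (non-vanishing of the Jacobian $J$, hence no conjugate points for $h_3 \neq 0$) combined with the standard second-order theory linking absence of conjugate points to local optimality of normal extremals. Your explicit verification of the hypotheses of that theory is a harmless elaboration of what the paper leaves implicit.
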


\subsection{Diffeomorphic property of the exponential mapping}

Denote a domain in the preimage of the exponential mapping
$$
N = \{(\theta, \f, \tau) \mid \theta > 0, \ \f \in (0, \pi), \ \tau > 0\},
$$
and a subset of the invariant set
\begin{multline*}
D_+ = \left\{q = (x, y, z) \in M \mid x > y, \  0 < z < \frac{\eps^2}{2} (\sinh\tau+\tau), \right. \\
\left. \tau = \arcosh\left( \frac{x^2-y^2}{2\eps^2}+1\right)\right\}.
\end{multline*}

\begin{proposition}\label{prop:diff}
\begin{itemize}
\item[$(1)$]
$\Exp(N) \subset D_+$.
\item[$(2)$]
The mapping $\map{\Exp}{N}{D+}$ is a diffeomorphism. 
\end{itemize}
\end{proposition}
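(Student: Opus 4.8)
The plan is to prove both assertions by exhibiting an explicit inverse, using that $\Exp$ is a local diffeomorphism on $N$. Throughout set $\mu = \eps^2 h_3^2 = \sinh^2\theta\sin^2\f$, which is positive on $N$, and abbreviate by $\tilde\tau = \arcosh\!\left(\frac{x^2-y^2}{2\eps^2}+1\right)$ the ``surface parameter'' attached to a point $(x,y,z)$, to be distinguished from the evolution parameter $\tau$ of an extremal. A direct simplification of the expressions for $x$ and $y$ in the case $h_3\neq 0$ gives the factorized forms
\begin{align*}
h_3(x+y) &= (e^\tau - 1)(\cosh\theta + \sinh\theta\cos\f), \\
h_3(x-y) &= (1 - e^{-\tau})(\cosh\theta - \sinh\theta\cos\f),
\end{align*}
in which every factor is positive on $N$; multiplying them and using $h_1^2 - h_2^2 = 1 + \eps^2 h_3^2$ yields the key relation
\[
\cosh\tilde\tau = \cosh\tau + \frac{\cosh\tau - 1}{\mu}, \qquad 0 < \tau < \tilde\tau .
\]

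For item (1) I would read off the three defining conditions of $D_+$ in turn. The factorization shows at once $x+y>0$ and $x-y>0$, hence $x>|y|$ (in particular $x>y$). From \eqref{zt} and $\tau>0$, $h_3\neq 0$ both summands of $z$ are positive, so $z>0$. The remaining, and genuinely substantial, inequality $z < \frac{\eps^2}{2}(\sinh\tilde\tau + \tilde\tau)$ I would rewrite, using $\frac{1}{2h_3^2} = \frac{\eps^2}{2\mu}$, as
\[
\frac{\sinh\tau - \tau}{\mu} + \sinh\tau + \tau < \sinh\tilde\tau + \tilde\tau,
\]
with $\tilde\tau$ and $\tau$ linked by the key relation. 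This is a one–variable inequality for hyperbolic functions; I expect to prove it by introducing, for a fixed $T=\tilde\tau>0$, the function $Z_T(\tau) = (\sinh\tau-\tau)\frac{\cosh T-\cosh\tau}{\cosh\tau-1} + \sinh\tau + \tau$ on $(0,T)$ and showing it is strictly increasing with limits $0$ at $\tau\to 0^+$ and $\sinh T+T$ at $\tau\to T^-$; the inequality is then exactly $Z_T(\tau)<Z_T(T^-)$.

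For item (2), since $h_3\neq 0$ on $N$, Proposition \ref{prop:conj} gives $J\neq 0$ there, so $\Exp|_N$ is a local diffeomorphism; it therefore suffices to prove that it is a bijection onto $D_+$. I would construct the inverse explicitly. Given $(x,y,z)\in D_+$ one has $x>|y|$ and $\tilde\tau>0$; the monotonicity of $Z_{\tilde\tau}$ established above shows that the equation $\frac{\eps^2}{2}Z_{\tilde\tau}(\tau)=z$ has a unique root $\tau\in(0,\tilde\tau)$, and the key relation then fixes $\mu>0$, hence $h_3=\sqrt\mu/\eps$. Setting $p=h_3(x+y)/(e^\tau-1)$ and $m=h_3(x-y)/(1-e^{-\tau})$ one checks that $pm=1+\mu$ holds automatically, whence $\cosh\theta=(p+m)/2$, $\sinh\theta\cos\f=(p-m)/2$ and $\sin\f=\sqrt\mu/\sinh\theta$ recover a unique $(\theta,\f)$ with $\theta>0$, $\f\in(0,\pi)$. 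This produces a two–sided inverse, so $\Exp|_N$ is a bijection onto $D_+$; being a bijective local diffeomorphism, it is a diffeomorphism.

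The crux of the whole argument is the single calculus fact that $Z_T$ is strictly increasing on $(0,T)$: it simultaneously delivers the strict containment in (1) and the uniqueness of $\tau$ in (2). I expect $Z_T'>0$ to require a careful estimate after clearing denominators, reducing the claim to the positivity of an explicit combination of $\sinh$, $\cosh$ and polynomial terms in $\tau$ and $T$; this is where the real work lies, the remaining algebra (the factorization and the recovery of $\theta,\f$) being routine.
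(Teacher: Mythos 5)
Your route is genuinely different from the paper's. The paper proves item (1) by combining the invariance of $D$ with the absence of conjugate points (if $\Exp(N)$ met $\partial D$, then $\Exp$ being a local diffeomorphism near that preimage point would force $\Exp(N)\not\subset D$, a contradiction), and it proves item (2) via Hadamard's global diffeomorphism theorem, the substantial work being a case-by-case proof of properness ($\nu_n\to\partial N\Rightarrow\Exp(\nu_n)\to\partial D_+$). You instead build an explicit inverse. I checked your algebra: the factorizations of $h_3(x\pm y)$, the key relation $\cosh\tilde\tau=\cosh\tau+(\cosh\tau-1)/\mu$ (hence $0<\tau<\tilde\tau$), the automatic identity $pm=1+\mu$, the recovery of a unique $(\theta,\f)$ with $\theta>0$, $\f\in(0,\pi)$, and the limits $Z_T(0^+)=0$, $Z_T(T^-)=\sinh T+T$ are all correct; granted the monotonicity of $Z_T$, items (1) and (2) follow exactly as you describe. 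Your approach buys more than the paper's: an explicit inverse of $\Exp$ (hence, in principle, explicit formulas for the distance), and it avoids both Hadamard's theorem and the properness analysis, which the paper itself only carries out for part of the cases. The paper's argument, on the other hand, needs almost no computation beyond the Jacobian.

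However, as submitted your proof has one genuine gap, and it sits exactly at the point you call the crux: the strict monotonicity of $Z_T$ on $(0,T)$ is only announced (``I expect to prove it''), yet both the inequality $z<\frac{\eps^2}{2}(\sinh\tilde\tau+\tilde\tau)$ in (1) and the uniqueness of $\tau$ in (2) rest on it, so nothing is actually proved without it. The claim is true and can be closed without any heavy estimates. Since $1/\mu=(\cosh T-\cosh\tau)/(\cosh\tau-1)$, write
\[
Z_T(\tau)=u(\tau)\cosh T+v(\tau),\qquad
u(\tau)=\frac{\sinh\tau-\tau}{\cosh\tau-1},\qquad
v(\tau)=\frac{2\tau\cosh\tau-\sinh\tau-\tau}{\cosh\tau-1},
\]
so that it suffices to show $u'>0$ and $v'>0$ for $\tau>0$. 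A direct computation gives
\[
u'(\tau)=\frac{\tau\sinh\tau-2(\cosh\tau-1)}{(\cosh\tau-1)^2},\qquad
v'(\tau)=\frac{\cosh 2\tau+1-2\cosh\tau-\tau\sinh\tau}{(\cosh\tau-1)^2},
\]
and both numerators are even power series with nonnegative coefficients: the coefficient of $\tau^{2k}$ equals $\frac{2k-2}{(2k)!}$ in the first and $\frac{4^k-2k-2}{(2k)!}$ in the second, which vanish for $k=1$ and are positive for $k\ge 2$. Hence $Z_T'>0$ on $(0,T)$ for every $T>0$. With this lemma inserted, your proof is complete and constitutes a valid, self-contained alternative to the paper's argument.
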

\begin{proof}
(1) The set $D$ is invariant, thus $\Exp(N) \subset D$.

It follows from equality \eq{zt} that $z > 0$ for $\tau>0$. Let us prove that $\Exp(N) \cap \partial D\neq 0$. By contradiction, suppose that there exists a point $q \in \Exp(N) \cap \partial D$. Then $q = \Exp(\theta, \f, \tau)$ for some $\nu = (\theta, \f, \tau) \in N$. By virtue of Proposition \ref{prop:conj}, the mapping $\Exp$ is a local diffeomorphism at the point $\nu$, thus $\Exp(N)$ contains a neighborhood of the point $q$. Thus $\Exp(N)\not\subset D$, a contradiction.

So $\Exp(N) \subset \intt D \cap \{z > 0\}$, thus $\Exp(N) \subset D_+$.

(2) Let us apply Hadamard's global diffeomorphism theorem \cite{had}: if $\map{F}{X}{Y}$ is a smooth nondegenerate proper mapping between connected simply connected smooth manifolds of the same dimension, then $F$ is a diffeomorphism. Let us apply this theorem to the mapping $\map{\Exp}{N}{D_+}$. 

It is obvious that $N$ and $D_+$ are 3D smooth connected simply connected manifolds.

The mapping $\map{\Exp}{N}{D_+}$ is nondegenerate by Proposition \ref{prop:conj}.

Let us prove that the mapping $\map{\Exp}{N}{D_+}$ is proper, i.e., for any compact $K\subset D_+$ the preimage $\Exp^{-1}(K) \subset N$ is compact. In order to prove this we show that if a sequence $\nu_n = (\theta_n, \f_n, \tau_n)\to \partial N$, then its image $\Exp(\nu_n) \to \partial D_+$.

If $\nu_n = (\theta_n, \f_n, \tau_n)\to \partial N$, then there exists a subsequence on which one of the following conditions hold:

1) $\theta \to 0$, 

2) $\theta \to \infty$,

3) $\f \to 0$, 

4) $\f \to \pi$,

5) $\tau \to 0$, 

6) $\tau \to + \infty$. 

Let us consider these possibilities case by case and show that in each of them $q = (x, y, z) = \Exp(\theta, \f, \tau) \to \partial D_+$.

1) Let $\theta \to 0$, then $h_3 \to 0$.

1.1) Let $\tau \to \bar{\tau} \in (0, + \infty)$. Then $z \to + \infty$.

1.2) Let $\tau \to + \infty$. Then $z \to + \infty$.

1.3) Let $\tau \to 0$. Then $\sinh \tau - \tau \sim \frac{\tau^3}{6}$.

1.3.1) Let $\frac{\tau^3}{h_3^2}\to 0$, then $z \to 0$.

1.3.2) Let $\frac{\tau^3}{h_3^2}\to + \infty$, then $z \to + \infty$.

1.3.3) Let $\frac{\tau^3}{h_3^2}\to C \in (0, + \infty)$, then $x \to \infty$.

2) Let $\theta \to + \infty$.

2.1) Let $\tau \to 0$.

2.1.1) Let $h_3 \to 0$, then $z \to 0$.

2.1.2) Let $h_3 \to \bar h_3\in(0. + \infty)$, then $z \to 0$.

2.1.3) Let $h_3 \to 0$.

2.1.3.1) Let $\frac{\tau^3}{h_3^2} \to 0$, then $z \to 0$.

2.1.3.2) Let $\frac{\tau^3}{h_3^2} \to \infty$, then $z \to +\infty$.

2.1.3.3) Let $\frac{\tau^3}{h_3^2} \to C \in (0, +\infty)$, then $x \to +\infty$.

2.2) Let $\tau \to + \infty$, then $z \to +\infty$.

2.3) Let $\tau \to \bar \tau \in (0, + \infty)$.

2.3.1) Let $h_3 \to 0$, then $x \to +\infty$.

2.3.2) Let $h_3 \to + \infty$.

2.3.2.1) Let $\frac{h_1}{h_3} \to + \infty$, then $x \to +\infty$.

2.3.2.2) Let $\frac{h_1}{h_3} \to C \in (0,+ \infty)$. Then
\begin{align*}
&x \to \frac{\eps \sinh \bar \tau}{\sin \bar \f} + \eps(\cosh \bar \tau - 1) \frac{\cot \bar \f}{\eps} =: \bar x, \\
&y \to \eps \cot \bar \f \sinh \bar \tau + \eps \frac{\cosh \bar \tau-1}{\sin \bar f} = : \bar y, \\
&z \to \frac{\eps^2}{2}(\sinh \bar \tau + \bar \tau) =: \bar z, \\
&\bar \tau \in (0, + \infty), \qquad \bar{\f} \in (0, \pi).
\end{align*}
Thus $\cosh \bar \tau = \frac{\bar x^2 - \bar y^2}{2 \eps^2}+1$, i.e., $q = (x, y, z) \to \bar q = (\bar x, \bar y, \bar z) \in \partial D_+$.

2.3.3) Let $h_3 \to \bar h_3 \in (0, + \infty)$.

2.3.3.1) Let $\f \to \bar \f \in (0, \frac{\pi}{2})$, then $x \to + \infty$.

2.3.3.2) Let $\f \to  \frac{\pi}{2}$, then $x \to + \infty$.

2.3.3.3) Let $\f \to \bar \f \in (\frac{\pi}{2}, \pi)$, then $x \to + \infty$ or $y \to + \infty$.

The remaining cases 3)--6) are considered similarly.

Thus the mapping $\map{\Exp}{N}{D_+}$ is proper. By Hadamard's theorem, this mapping is a diffeomorphism.

\end{proof}

\subsection{Attainable sets}\label{subsec:attain}

\begin{theorem}\label{th:attain}
There holds the identity $\A = D$.
\end{theorem}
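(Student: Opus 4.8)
Since Proposition~\ref{prop:invar} already gives $\A\subset D$, the whole task is the reverse inclusion $D\subset\A$, i.e. that every point of $D$ is reachable from $q_0$. The plan is to cover $D$ by a few explicitly attainable pieces: the open upper part, its mirror image under a symmetry, the equatorial slice $\{z=0\}$, and the boundary surface $S$. I would organize the argument as $D=\intt D\cup S$, proving $\intt D\subset\A$ and $S\subset\A$ separately.

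The hard analytic work for the upper interior is already done by Proposition~\ref{prop:diff}: since the diffeomorphism $\map{\Exp}{N}{D_+}$ is in particular onto, every point of $D_+$ is the endpoint of a normal extremal trajectory, so $D_+=\Exp(N)\subset\A$, and in particular $\{x>|y|,\ 0<z<\frac{\eps^2}{2}(\sinh\tau+\tau)\}\subset\A$. Next I would exploit the reflection $\Phi:(x,y,z)\mapsto(x,-y,-z)$. A direct check on system~\eqref{pr1} shows that $\Phi$ carries an admissible trajectory with control $(u_1,u_2,u_3)$ to the admissible trajectory with control $(u_1,-u_2,-u_3)$, which satisfies the same constraint, and that $\Phi$ fixes $q_0$; hence $\Phi(\A)=\A$. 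Since $\Phi$ fixes $x$ and $|y|$ (so fixes $\tau$) and flips $z$, it maps the upper interior onto $\{x>|y|,\ -\frac{\eps^2}{2}(\sinh\tau+\tau)<z<0\}$, giving this lower interior as well. The equatorial slice $\{x>|y|,\ z=0\}$ is filled by the straight-line extremals \eqref{xyz0}: with $h_3=0$ the parametrization yields $x=-h_1t=t\cosh\theta$, $y=h_2t=\pm t\sinh\theta$, $z=0$, whose endpoints (over $\theta\ge0$, $t>0$, both signs) sweep out exactly $\{x>|y|,\ z=0\}$. Combining the three pieces yields $\intt D=\{x>|y|,\ |z|<\frac{\eps^2}{2}(\sinh\tau+\tau)\}\subset\A$.

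For the boundary I would use that, as recorded in the subsection on extremal trajectories, the abnormal extremals are lightlike and fill $S$; being admissible trajectories their endpoints lie in $\A$, so $S\subset\A$. Concretely, for a target on $S$ with $z\ge0$ one solves the abnormal equations \eqref{x},\eqref{y} together with $z=\frac{\eps^2}{2}(\sinh\tau+\tau)$ for the initial covector under the lightlike constraint $(h_1^0)^2=(h_2^0)^2+\eps^2h_3^2$ (the level $\{H=0\}$, preserved by the flow), and the case $z\le0$ follows again from $\Phi$. Since $\partial D=S$ and $D=\intt D\cup S$, the two inclusions give $D\subset\A$, and together with $\A\subset D$ we conclude $\A=D$.

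The step I expect to be the main obstacle is this last one: showing that the abnormal lightlike trajectories reach \emph{every} point of $S$, rather than merely lying on it. The boundary value of $z$ forces $\tau\to0$ together with a degeneration of the covector near the rim $x=|y|$, $z=0$, so one must check solvability of \eqref{x},\eqref{y} carefully there; the rim itself is covered directly by the two lightlike lines $y=\pm x$, $z=0$. If one prefers to sidestep this surjectivity analysis, an alternative is to establish that $\A$ is closed and then obtain $S\subset\A$ as a limit of interior points of $\A$; but closedness of the infinite-time attainable set is not automatic and would itself require a separate argument.
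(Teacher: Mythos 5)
Your proposal follows essentially the same route as the paper's proof: $D_+=\Exp(N)\subset\A$ from Proposition~\ref{prop:diff}, the mirror-image set $D_-$ via a reflection symmetry, the slice $\{x>|y|,\ z=0\}$ via the $h_3=0$ extremals \eq{xyz0}, and $\partial D=S$ via the lightlike abnormal trajectories, yielding $D\subset\A$ and hence $\A=D$. Your two refinements are in fact points the paper glosses over: you write the symmetry correctly as $(x,y,z,u_2,u_3)\mapsto(x,-y,-z,-u_2,-u_3)$ (the paper's bare ``$z\mapsto-z$'' is not literally a symmetry of system \eq{pr1}, though the conclusion is unaffected since the relevant sets are symmetric in $y$), and you rightly note that the assertion that abnormal trajectories fill \emph{all} of $S$ requires the surjectivity check you sketch, which the paper simply asserts.
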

\begin{proof}
In Proposition \ref{prop:invar} we proved the inclusion $\A \subset D$, let us prove the reverse inclusion.

Proposition \ref{prop:diff} gives the equality $\Exp(N) = D_+$, whence $D_+ \subset \A$. By virtue of the symmetry $z \to - z$, we get the inclusion $D_- \subset \A$, where $D_-$ is the image of the domain $D_+$ for this symmetry. It follows from the parametrisation of extremal trajectories for $h_3 = 0$ \eq{xyz0} that 
$$
\{q = (x, y, z) \in M \mid x > |y|, \ z = 0\} \subset \A.
$$
Thus $\intt D \subset \A$. The boundary of the set $D$ is filled by abnormal (lightlike) trajectories, thus $\partial D \subset \A$.

So $D \subset \A$, thus $D = \A$.
\end{proof}

Denote by $\A^-$ the attainable set of system \eq{pr1}, \eq{pr2} from the point $q_0$ for arbitrary nonpositive time (causal past of the point $q_0$). 

\begin{theorem}\label{th:attain-}
There holds the equality
\begin{multline*}
\A^- = \left\{ q = (x, y,z)\in M \mid x \leq y, \ |z| \leq \frac{\eps^2}{2}(\sinh \tau + \tau), \right. \\
\left. \tau = \arch\left(\frac{x^2-y^2}{2 \eps^2} + 1\right)\right\}.
\end{multline*}
\end{theorem}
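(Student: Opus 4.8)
The plan is to deduce Theorem~\ref{th:attain-} from Theorem~\ref{th:attain} by means of the time-reversal symmetry of the Lorentzian structure, rather than repeating the Pontryagin/Hadamard analysis for negative time. The symmetry I would use is the linear map $\Phi \colon M \to M$, $\Phi(x, y, z) = (-x, y, -z)$. One first checks that $\Phi$ is an involutive automorphism of the Heisenberg group and that it acts on the frame by $\Phi_* X_1 = -X_1$, $\Phi_* X_2 = X_2$, $\Phi_* X_3 = -X_3$; geometrically $\Phi$ reverses the time orientation of $g$ while preserving the causal cone, so it ought to interchange the causal future $\A$ and the causal past $\A^-$.

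The central step is the identity $\A^- = \Phi(\A)$. Given an admissible trajectory $q(\cdot)$ on $[t_1, 0]$ with $t_1 \le 0$, $q(0) = q_0 = \Id$ and control $u(\cdot) \in U$, I would reparametrize by $s = -t \in [0, -t_1]$ and set $\tilde q(s) = \Phi(q(-s))$. From $\dot{\tilde q}(s) = -\Phi_* \dot q(-s)$ and the frame relations above, $\tilde q$ satisfies system \eq{pr1} with control $(\tilde u_1, \tilde u_2, \tilde u_3) = (u_1, -u_2, u_3)$. Because the cone condition $u_1 \ge \sqrt{u_2^2 + u_3^2}$ depends on $u_2$ only through $u_2^2$, we get $\tilde u \in U$, and $\tilde q(0) = \Phi(q_0) = q_0$ since $\Id$ is fixed by $\Phi$. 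Hence $\Phi(q(t_1)) \in \A$, so $\Phi(\A^-) \subset \A$; running the same construction through the involution $\Phi^{-1} = \Phi$ in the opposite direction gives $\Phi(\A) \subset \A^-$, and therefore $\A^- = \Phi(\A)$.

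It then remains to push the explicit description $\A = D$ forward by $\Phi$. A point $(x, y, z)$ lies in $\A^- = \Phi(D)$ exactly when $(-x, y, -z) \in D$. Since $\Phi$ fixes both $x^2 - y^2$ and $|z|$, the bound $|z| \le \frac{\eps^2}{2}(\sinh \tau + \tau)$ and the relation $\tau = \arch\left(\frac{x^2 - y^2}{2\eps^2} + 1\right)$ are unchanged, while the half-space condition $x \ge |y|$ becomes $x \le -|y|$. This is exactly the set in the statement, reading the condition $x \le y$ together with the domain $x^2 \ge y^2$ forced by $\tau = \arch(\cdots)$.

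The computations here are all routine; the one place to be careful is the sign bookkeeping of the time reversal. The hard part is to verify that the reparametrized and reflected curve is again a trajectory of \eq{pr1} with a control in the same cone $U$, and not in the opposite cone $-U$. This is precisely where the two sign flips $\Phi_* X_1 = -X_1$ and $\Phi_* X_3 = -X_3$ on the timelike and central directions conspire with the reversal $s = -t$ to return a control in $U$.
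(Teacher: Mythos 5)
Your proposal is correct and takes essentially the same route as the paper, which likewise deduces the theorem from the identity $\A = D$ (Theorem~\ref{th:attain}) by applying a time-reversing discrete symmetry of system \eq{pr1} to the attainable set. The only difference is the particular symmetry: the paper states $(x, y, z, t, u_3) \mapsto (-x, -y, -z, -t, -u_3)$, whereas you use $(x, y, z, t, u_2) \mapsto (-x, y, -z, -t, -u_2)$; the two differ by the system's internal symmetry $(y, z, u_2, u_3) \mapsto (-y, -z, -u_2, -u_3)$, which preserves $D$, so both yield the same description $\A^- = \left\{ x \leq -|y|, \ |z| \leq \frac{\eps^2}{2}(\sinh\tau + \tau) \right\}$ (and your sign bookkeeping is in fact the more careful of the two, since your map verifiably preserves the $\dot z$ equation under time reversal).
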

\begin{proof}
Control system \eq{pr1} has a symmetry $(x, y, z, t, u_3) \mapsto (-x, -y, -z, -t, -u_3)$. We apply this symmetry to the attainable set $\A = D$ and get the statement of the theorem.
\end{proof}

Denote by $\A_q$ (resp. by $\A_q^-$) the attainable set of system \eq{pr1} from a point $q \in M$ for arbitrary nonnegative (resp. nonpositive) time.

\begin{theorem}\label{th:Aq1}
Let $q_1 = (x_1, y_1, z_1) \in M$. Then there hold the equalities
\begin{align*}
&\A_{q_1} = q_1 \cdot \A = 
\left\{ 
\vphantom{\left[\frac{(x-x_1)^2- (y - y_1)^2}{2 \eps^2}+1\right]}
q = (x, y, z) \in M \mid x - x_1 \geq |y - y_1|, \right. \\
&\qquad \left| z - z_1 - \frac{x_1  y- x y_1}{2}\right| \leq \frac{\eps^2}{2}(\sinh \tau + \tau), \\ 
&\qquad \left. \tau = \arch \left[\frac{(x-x_1)^2- (y - y_1)^2}{2 \eps^2}+1\right]\right\},\\
&\A_{q_1}^- = q_1 \cdot \A^- = 
\left\{ 
\vphantom{\left[\frac{(x-x_1)^2- (y - y_1)^2}{2 \eps^2}+1\right]}
q = (x, y, z) \in M \mid x - x_1 \leq -|y - y_1|, \right. \\
&\qquad \left| z - z_1 - \frac{x_1  y- x y_1}{2}\right| \leq \frac{\eps^2}{2}(\sinh \tau + \tau), \\ 
&\qquad \left. \tau = \arch \left[\frac{(x-x_1)^2- (y - y_1)^2}{2 \eps^2}+1\right]\right\}.
\end{align*}
\end{theorem}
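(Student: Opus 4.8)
The plan is to exploit the left-invariance of the control system \eq{pr1} to reduce the computation of $\A_{q_1}$ and $\A_{q_1}^-$ to the already-known sets $\A = D$ (Theorem \ref{th:attain}) and $\A^-$ (Theorem \ref{th:attain-}), and then to make the group translation explicit using the product rule on $M$.

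First I would establish the two translation identities $\A_{q_1} = q_1 \cdot \A$ and $\A_{q_1}^- = q_1 \cdot \A^-$. Since the frame $X_1, X_2, X_3$ is left-invariant, for any admissible control $u(\cdot)$ the trajectory $q(t)$ of \eq{pr1} issuing from $q_0$ is carried by the left translation $L_{q_1} \colon q \mapsto q_1 \cdot q$ to the trajectory $q_1 \cdot q(t)$ issuing from $q_1 \cdot q_0 = q_1$, for the same control. As $L_{q_1}$ is a bijection of $M$, it puts trajectories from $q_0$ in bijection with trajectories from $q_1$ for both nonnegative and nonpositive time, whence $\A_{q_1} = L_{q_1}(\A) = q_1 \cdot \A$ and likewise $\A_{q_1}^- = q_1 \cdot \A^-$.

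It then remains to compute these translated sets explicitly. Writing a generic point of $q_1 \cdot \A$ as $q = q_1 \cdot p$ with $p = (x', y', z') \in \A$, the product rule on $M$ gives $x' = x - x_1$, $y' = y - y_1$, and, after the simplification $x_1 y' - x' y_1 = x_1 y - x y_1$, the central coordinate $z' = z - z_1 - (x_1 y - x y_1)/2$. Substituting these three expressions into the defining conditions of $\A = D$ from Theorem \ref{th:attain} --- namely $x' \geq |y'|$, $|z'| \leq \frac{\eps^2}{2}(\sinh \tau + \tau)$, and $\tau = \arch\left(\frac{(x')^2 - (y')^2}{2\eps^2} + 1\right)$ --- reproduces verbatim the three inequalities stated for $\A_{q_1}$. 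The description of $\A_{q_1}^-$ follows identically from the formula for $\A^-$ in Theorem \ref{th:attain-}.

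The argument is essentially routine, since the analytic work --- the explicit shape of $\A$ and $\A^-$ --- has already been carried out in Theorems \ref{th:attain} and \ref{th:attain-}. The only point demanding care is the bookkeeping in the central coordinate of the group product: one must check that the cross term $x_1 y' - x' y_1$ collapses to $x_1 y - x y_1$, so that the resulting inequality is expressed purely through the endpoint $q$ and the base point $q_1$, with no residual dependence on the translated coordinates $x', y'$.
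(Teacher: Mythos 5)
Your proposal is correct and follows essentially the same route as the paper: left-invariance of the frame yields $\A_{q_1} = q_1 \cdot \A$ and $\A_{q_1}^- = q_1 \cdot \A^-$, and the explicit descriptions then follow by substituting the group product into the formulas of Theorems \ref{th:attain} and \ref{th:attain-}. You merely spell out the central-coordinate bookkeeping (the collapse of $x_1 y' - x' y_1$ to $x_1 y - x y_1$) that the paper leaves implicit.
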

\begin{proof}
Equalities $\A_{q_1} = q_1 \cdot \A$ and $\A_{q_1}^- = q_1\cdot \A^-$ follow since the Lorentzian structure is left-invariant. The explicit formulas for $\A_{q_1}$ and $\A_{q_1}^-$ follow from the explicit form of the sets $\A$  and $\A^-$. 
\end{proof}

\subsection{Existence of optimal trajectories}

\begin{theorem}\label{th:exist}
If $q_1 \in \A$, then there exists an optimal trajectory in problem \eq{pr1}--\eq{pr3}.
\end{theorem}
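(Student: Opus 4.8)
The goal is to establish existence of a Lorentzian length maximizer connecting $q_0 = \Id$ to any $q_1 \in \A$. The natural route is the Filippov-type existence theorem for optimal control, but the length functional \eq{pr3} is parametrization-invariant, so some care is needed: one cannot directly apply Filippov to the unreduced problem because the time $t_1$ is free and the cost is degenerate under reparametrization. The plan is therefore to first reduce to arclength-parametrized trajectories, where the maximization of $l$ becomes the maximization of the fixed final time $t_1$ subject to the control lying on the boundary cone $u_1 = \sqrt{u_2^2 + u_3^2}$ rescaled to unit Lorentzian speed.

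First I would record that for $q_1 \in \intt \A = \intt D$, every admissible trajectory reaching $q_1$ has strictly positive Lorentzian length, so the supremum $L = \sup l$ over admissible trajectories is finite (this finiteness must itself be argued, see below) and positive. The boundary case $q_1 \in \partial D = S$ is degenerate: there $q_1$ is reached only by lightlike abnormal trajectories with $l = 0$, and the supremum is attained trivially, so I would dispose of $\partial \A$ separately at the outset and concentrate on $q_1 \in \intt \A$.

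Second, for $q_1 \in \intt \A$ I would take a maximizing sequence of admissible trajectories $q^n(\cdot)$ with lengths $l_n \to L$. After reparametrizing each by Lorentzian arclength, each becomes a trajectory of \eq{pr1} with controls satisfying $u_1^2 - u_2^2 - u_3^2 = 1$ on $[0, l_n]$, so the final times equal $l_n$ and are bounded. The key compactness input is a uniform a priori bound on the trajectories: since all $q^n(t)$ lie in $\A_{q_0} \cap \A^-_{q_1}$ (they start at $q_0$ and must still be able to reach $q_1$), and by Theorem \ref{th:Aq1} this intersection is a compact subset of $M$ for $q_1 \in \intt \A$, the trajectories stay in a fixed compact set; this simultaneously gives the finiteness of $L$. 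The controls, normalized to the unit-speed cone, take values in a set that after a further standard trick (bounding the "speed along the cone," i.e.\ the magnitude of the tangential component) is compact and convex in the relevant velocity set. I would then invoke Filippov's theorem — the velocity set $\{u_1 X_1 + u_2 X_2 + \eps u_3 X_3 \mid u_1 = \sqrt{u_2^2+u_3^2}\}$ intersected with the unit-speed normalization is compact, and the extended velocity set including the cost is convex — to extract a subsequence converging uniformly to an admissible limit trajectory $\hat q(\cdot)$ from $q_0$ to $q_1$ whose length equals $L$.

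The main obstacle I expect is the degeneracy of the length functional under reparametrization, which prevents a naive application of Filippov to the free-time problem: the maximizing sequence could a priori concentrate length near a single instant or let the parametrization degenerate. Passing to arclength parametrization cures this but requires knowing that the maximizing trajectories are genuinely timelike (nonzero length) so that the arclength reparametrization is well defined — which is exactly why the interior case $q_1 \in \intt \A$ must be separated from the lightlike boundary. A secondary technical point is verifying convexity of the extended velocity set: the cone constraint $u_1 \geq \sqrt{u_2^2 + u_3^2}$ is convex and the integrand $\sqrt{u_1^2 - u_2^2 - u_3^2}$ is concave on this cone, so the set $\{(\dot q, l) \mid l \leq \sqrt{u_1^2 - u_2^2 - u_3^2}\}$ is convex, which is precisely the hypothesis Filippov's theorem needs; I would make this concavity explicit as the crux of the compactness argument.
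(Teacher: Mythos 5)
There is a genuine gap, and it sits exactly where you placed the main technical work: the reduction to arclength parametrization. Your argument needs every trajectory in a maximizing sequence to be reparametrizable to unit Lorentzian speed, and you justify the separation of cases by claiming that every admissible trajectory reaching $q_1 \in \intt \A$ has strictly positive length. That claim is false: broken lightlike trajectories reach interior points. For instance, with $\eps = 1$, the control $u = (1,1,0)$ on $[0,1]$ followed by $u = (1,-1,0)$ on $[1,2]$ steers $q_0$ to the point $(2,0,-1)$, which lies in $\intt\A$ (here $\tau = \arcosh 3$ and $\tfrac12(\sinh\tau+\tau) \approx 2.3 > 1 = |z|$), yet the trajectory has $l = 0$. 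More to the point, even a trajectory of positive total length may contain lightlike arcs, i.e.\ $u_1 = \sqrt{u_2^2+u_3^2}$ on a set of positive measure; on such arcs the arclength function is constant, so ``reparametrization by arclength'' collapses them, and the resulting curve is no longer a trajectory of \eq{pr1} joining $q_0$ to $q_1$. Splitting off $q_1 \in \partial\A$ at the outset does not cure this, because the defect occurs along maximizing sequences for \emph{interior} targets.

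The second gap is compactness: even granting timelike maximizing sequences, the normalized velocity set $\{u_1 X_1 + u_2 X_2 + \eps u_3 X_3 \mid u_1^2 - u_2^2 - u_3^2 = 1\}$ is an unbounded hyperboloid, so Filippov's hypothesis fails as stated. The controls are only $L^1$-bounded (since $\int u_1\, dt = x_1$), the trajectories are not equi-Lipschitz, and length can concentrate; the ``further standard trick'' you invoke to restore compactness is not a routine step but the crux of the whole proof. The clean fix is a different normalization: take $u_1 \equiv 1$, i.e.\ use the coordinate $x$ as time. Since $u_1 \geq \sqrt{u_2^2 + u_3^2}$, \emph{every} causal trajectory (lightlike arcs included) admits this parametrization; the control set $\{u_2^2 + u_3^2 \leq 1\}$ is compact and convex, the integrand $\sqrt{1 - u_2^2 - u_3^2}$ is concave, and $\dot x = 1$ bounds the free final time by $x_1$. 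This is precisely the route the paper takes, except that it does not reprove the Filippov-type statement: it cites Theorem~2 of \cite{sl_exist} (reproduced as Theorem \ref{th:sl-exist}, formulated for the $u_1 \equiv 1$ system \eq{pr21}--\eq{pr24}) and verifies its three hypotheses. Your two sound ingredients --- compactness of $\A \cap \A_{q_1}^-$ via Theorems \ref{th:attain} and \ref{th:Aq1}, and the $x$-coordinate serving as a clock --- are exactly conditions 2 and 3 of that theorem; what is missing in your proposal is a reduction that makes the compactness--convexity machinery actually applicable, and that is supplied by the $u_1 \equiv 1$ normalization, not by arclength.
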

\begin{proof}
Let us apply the following sufficient condition for existence of optimal trajectories.

\begin{theorem}[Theorem 2 \cite{sl_exist}]\label{th:sl-exist}
Let problem \eq{pr1}--\eq{pr3} satisfy the following conditions:
\begin{enumerate}
\item
$q_1 \in \A$,
\item
the set $\A \cap \A_{q_1}^-$ is compact,
\item
$T(q_1) < + \infty$.
\end{enumerate}
Then there exists an optimal trajectory in problem \eq{pr1}--\eq{pr3}.
\end{theorem}
Here
\begin{align}
&T(q_1) = \sup \{t_1 > 0 \mid \exists \text{ trajectory $q(t)$ of system  \eq{pr21}--\eq{pr24}}, \ t \in [0, t_1], \\
&\qquad\qquad\qquad \text{ such that } q(0) = q_0, \ q(t_1) = q_1\}, \nonumber\\
&\dot q = X_1 + u_2 X_2 + \eps u_3 X_3, \qquad q \in M, \quad (u_2, u_3) \in \{u_2^2 + u_3^2 \leq 1 \}, \label{pr21}\\
&q(0) = q_0, \qquad q(t_1) = q_1, \qquad t_1 \text{ is free}, \nonumber \\
&l = \int_0^{t_1} \sqrt{1 - u_2^2 - u_3^2} dt \to \max. \label{pr24}
\end{align}

Let us check conditions 1--3 of Theorem \ref{th:sl-exist}.

Condition 1. is verified by the hypothesis of this theorem.

Condition 2. holds by virtue of the explicit form of the sets  $\A$ and $\A_{q_1}^-$ (see Theorems \ref{th:attain}, \ref{th:Aq1}).

Condition 3. holds since for system \eq{pr21} we have $\dot x = 1$, whence $T \leq x_1$.

Consequently, there exists an optimal trajectory in problem \eq{pr1}--\eq{pr3}.
\end{proof}

\begin{remark}
Existence of optimal trajectories holds also from the fact that the future cone $C = \{-dx^2 + dy^2 + {\eps^2} dz^2 \leq 0, \ - dx \leq 0\}$ intersects with the commutant $\R X_3(q_0)$ of the Lie algebra by the zero element only, by Corollary~1 \cite{sl_exist_pod}.
\end{remark}

\subsection{Optimality of extremal trajectories}

\begin{theorem}
Any extremal trajectory in problem \eq{pr1}--\eq{pr3} is optimal.
\end{theorem}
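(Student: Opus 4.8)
The plan is to assemble three results already in hand: existence of optimal trajectories (Theorem~\ref{th:exist}), the fact that every optimal trajectory is extremal (Pontryagin maximum principle), and the diffeomorphic property of the exponential mapping (Proposition~\ref{prop:diff}). The logical skeleton is the classical ``existence plus uniqueness of extremals implies optimality'' argument: if a given endpoint is reached by a \emph{unique} extremal of a certain type, and the optimal trajectory to that endpoint is forced to be of that type, then the unique extremal must coincide with the optimal one. I would therefore split the argument according to the three classes of extremals present here: normal with $h_3 \neq 0$, normal with $h_3 = 0$, and abnormal (lightlike).

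First I would treat normal extremals with $h_3 \neq 0$. The Hamiltonian flow admits the symmetry $(\theta, \f) \mapsto (-\theta, \f + \pi)$, which fixes $h_1, h_2, h_3$, together with the symmetry $z \mapsto -z$; using these, every such extremal is equivalent to one whose parameters lie in $N$ and whose endpoint is $q_1 = \Exp(\theta, \f, \tau) \in D_+$. By Theorem~\ref{th:exist} there is an optimal trajectory $\tilde q$ from $q_0$ to $q_1$, and by the maximum principle $\tilde q$ is extremal. Since $q_1 \in \intt D$ while abnormal trajectories fill $\partial D = S$, the trajectory $\tilde q$ is not abnormal; since $z(q_1) > 0$ while the $h_3 = 0$ extremals lie in $\{z = 0\}$, it is not of that type. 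Hence $\tilde q$ is normal with $h_3 \neq 0$, and after the above normalization its parameters lie in $N$. By injectivity of $\map{\Exp}{N}{D_+}$ (Proposition~\ref{prop:diff}), $\tilde q$ equals the given extremal, which is therefore optimal. Because this works for every $\tau > 0$, the exponential mapping has no cut points in $N$.

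Next, for normal extremals with $h_3 = 0$ — the straight lines \eq{xyz0} reaching points $(x_1, y_1, 0)$ with $x_1 > |y_1|$ — I would run the same scheme. Such an endpoint is interior to $D$ with $z = 0$, so the optimal trajectory to it is neither abnormal nor of type $h_3 \neq 0$, and is thus a straight line; the relations $x_1 = \cosh\theta \, t$, $y_1 = \pm \sinh \theta \, t$ with $h_1^2 - h_2^2 = 1$ determine $t$ and $\theta$ uniquely, so exactly one such line passes through $q_1$, whence the given extremal is optimal. Finally, for abnormal (lightlike) extremals, which have zero length and fill $S = \partial D$, I would show that the maximal length to a point $q_1 \in S$ is zero: if some trajectory to $q_1$ had positive length, then the optimal trajectory to $q_1$ would have positive length and hence be normal (abnormals being lightlike), but normal extremals reach only interior points, contradicting $q_1 \in \partial D$; so the lightlike trajectory is a maximizer.

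The routine parts are the symmetry bookkeeping and the case-by-case matching of extremal type to endpoint. The main obstacle I expect is the abnormal case: one must be certain that a positive-length maximizer can never reach a boundary point, which rests precisely on the already-proven facts that $\Exp(N) = D_+$ lies in $\intt D$ and that the $h_3 = 0$ lines stay in $\{z = 0\} \cap \intt D$. Consolidating these into the clean statement ``normal extremals never reach $\partial D$'' is the crux that makes the whole argument go through.
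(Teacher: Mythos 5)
Your proposal is correct and follows essentially the same route as the paper: existence from Theorem~\ref{th:exist} combined with uniqueness of the extremal reaching each endpoint (Proposition~\ref{prop:diff} for $D_+$, the symmetry $z \mapsto -z$ for $D_-$, the explicit lines \eq{xyz0} for $\{z=0,\ x>|y|\}$, and lightlike trajectories on $\partial\A$). The only difference is organizational --- you case-split by extremal class where the paper splits by endpoint location --- and you make explicit the symmetry bookkeeping and the type-exclusion arguments (optimal trajectories to $D_+$ cannot be abnormal or have $h_3=0$) that the paper's terse ``unique extremal trajectory'' claims leave implicit.
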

\begin{proof}
Let $q_1 \in \A$, then by Th. \ref{th:exist} there exists an optimal trajectory from the point $q_0$ to the point $q_1$. 

If $q_1 \in D_+$, then by Proposition \ref{prop:diff} there exists a unique extremal trajectory coming to the point $q_1$, thus it is optimal.

If $q_1 \in D_-$, then existence of optimal trajectory follows from the previous paragraph by virtue of the symmetry $z \to - z$. 

If $q_1 \in \{(x, y, z) \in M \mid x > |y|, \ z = 0\}$, then there exists a unique extremal trajectory \eq{xyz0} coming to the point $q_1$, thus it is optimal.

Finally, if $q_1 \in \partial \A$, then only lightlike trajectories with zero length functional
 come to $q_1$, thus they are optimal.
\end{proof}

\subsection{Spheres and distance}
Denote the Lorentzian distance from the point $q_0$:
$$
d(q_1) := \sup \{l(q(\cdot)) \mid q(\cdot) \text{ trajectory of  \eq{pr1}, \eq{pr2}}\},
$$
and the Lorentzian sphere of radius $r \geq 0$ with the center $q_0$:
$$
S(r) := \{q \in M \mid d(q) = r\}.
$$

\begin{theorem}\label{th:d}
\begin{itemize}
\item[$(1)$]
$\restr{d}{\partial \A} = 0$, $\restr{d}{\intt \A} \in (0, + \infty)$.
\item[$(2)$]
If $q = (x, y, 0) \in M$, then $d(q) = \sqrt{x^2-y^2}$.
\item[$(3)$]
The function $\restr{d}{\intt \A}$ is real-analytic.
\end{itemize}
\end{theorem}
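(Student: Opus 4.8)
The plan is to reduce everything to the identity $d(q_1)=t_1$, the terminal time of the (unique, optimal) extremal reaching $q_1$. This holds because on the level $\{H=-\frac12\}$ the maximized controls satisfy $u_1^2-u_2^2-u_3^2=h_1^2-h_2^2-\eps^2h_3^2=-2H=1$, so the length integrand in \eq{pr3} is identically $1$ and $l=t_1$; and because by Proposition~\ref{prop:diff} together with \eq{xyz0} each point of $\intt\A$ is reached by a unique extremal, which is optimal by the preceding theorem.

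With this in hand I would dispatch (1) and (2) directly. For (1): if $q_1\in\intt\A$ then $d(q_1)=t_1$ is positive and finite (finiteness follows, e.g., from $T(q_1)\le x_1<\infty$, which bounds the arclength), giving $\restr{d}{\intt\A}\in(0,+\infty)$; if $q_1\in\partial\A=S$, then only lightlike trajectories reach $q_1$ (a positive-length maximizer would be a normal extremal, which lands in $\intt\A$ by Proposition~\ref{prop:diff} and \eq{xyz0}), and these have zero length, so $d(q_1)=0$. For (2): a point $(x,y,0)$ with $x>|y|$ is reached only by the $h_3=0$ extremal \eq{xyz0}, where $x=-h_1t$, $y=h_2t$ and $h_1^2-h_2^2=1$, so $x^2-y^2=t^2$ and $d=t=\sqrt{x^2-y^2}$; on $x=|y|$ both sides vanish, so the formula holds on all of $\{z=0\}\cap\A$.

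For the real-analyticity in (3) the plan is to present $\restr{d}{\intt\A}$ as $t\circ F^{-1}$ for a single analytic diffeomorphism $F$. I would coordinatize the initial covector on $\{H=-\frac12\}$ by $(h_2^0,h_3)$ with $h_1^0=-\sqrt{1+(h_2^0)^2+\eps^2h_3^2}$ and set $F:(h_2^0,h_3,t)\mapsto(x,y,z)$ using the extremal formulas. The apparent $1/h_3$, $1/h_3^2$ singularities cancel in the Taylor expansions of $\sinh(h_3t)$ and $\cosh(h_3t)-1$, so $F$ is real-analytic across $\{h_3=0\}$, where it reduces to \eq{xyz0}. The pieces $h_3>0$, $h_3<0$, $h_3=0$ cover $D_+$, $D_-$, $\{z=0,\ x>|y|\}$, so $F$ is a bijection onto $\intt\A$; it is nondegenerate for $h_3\neq0$ by Proposition~\ref{prop:conj} (via the diffeomorphic coordinate change $(\theta,\f,\tau)\leftrightarrow(h_2^0,h_3,t)$). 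A short expansion at $h_3=0$ should give
\[
\frac{\partial(x,y,z)}{\partial(h_2^0,h_3,t)}\Big|_{h_3=0}=-\frac{t^2\bigl(\tfrac{t^2}{12}+\eps^2\bigr)}{h_1^0}\neq0,
\]
since $h_1^0<0$ and $t>0$, so $F$ is everywhere a local analytic diffeomorphism, hence an analytic diffeomorphism onto $\intt\A$, and $d=t\circ F^{-1}$ is real-analytic.

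The hard part is exactly this real-analyticity across the plane $\{z=0\}$: on $D_+$ and $D_-$ separately it is immediate from Proposition~\ref{prop:diff}, but the gluing forces one to leave the $(\theta,\f,\tau)$ chart (which degenerates as $h_3\to0$) for the $(h_2^0,h_3,t)$ chart and to verify nondegeneracy of $F$ at $h_3=0$, a case the conjugate-point computation of Proposition~\ref{prop:conj} explicitly excludes.
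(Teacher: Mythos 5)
Your proposal is correct, and for items (1) and (2) it follows essentially the paper's own argument: $d(q_1)$ equals the terminal time $t_1$ of the unique optimal arclength-parametrized extremal reaching $q_1$ (the integrand in \eq{pr3} is $\equiv 1$ on $\{H=-\frac12\}$), which gives $\restr{d}{\intt \A} \in (0,+\infty)$, $\restr{d}{\partial \A} = 0$, and $d=\sqrt{x^2-y^2}$ on $\{z=0\}$ via \eq{xyz0}. For item (3), however, you take a genuinely different and more complete route. The paper asserts that $\map{\Exp}{N}{D}$ is a real-analytic proper diffeomorphism and invokes the implicit function theorem; but Proposition \ref{prop:diff} establishes the diffeomorphism property only onto $D_+$, and the chart $(\theta,\f,\tau)$ degenerates exactly at $h_3=0$, i.e.\ over the plane $\{z=0\}\cap\intt\A$, so the paper's argument as written yields analyticity of $d$ on $D_+\cup D_-$ and leaves the gluing across $\{z=0\}$ implicit. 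You identify this as the crux and repair it by passing to the chart $(h_2^0,h_3,t)$ with $h_1^0=-\sqrt{1+(h_2^0)^2+\eps^2h_3^2}$, in which the exponential map is real-analytic across $h_3=0$ (the $1/h_3$, $1/h_3^2$ singularities cancel, and at $h_3=0$ one recovers \eq{xyz0}), is a bijection onto $\intt\A$ (the pieces $h_3>0$, $h_3<0$, $h_3=0$ map onto $D_+$, $D_-$, $\{z=0,\ x>|y|\}$ by Proposition \ref{prop:diff} and the symmetry), and is nondegenerate everywhere: for $h_3\neq 0$ by Proposition \ref{prop:conj} composed with an analytic change of coordinates, and at $h_3=0$ by your Jacobian computation, which I verified: since $\partial z/\partial h_2^0=\partial z/\partial t=0$ and $\partial z/\partial h_3=\frac{t^3}{12}+\eps^2 t$ at $h_3=0$, while the remaining $2\times 2$ minor equals $t\,\frac{(h_1^0)^2-(h_2^0)^2}{h_1^0}=\frac{t}{h_1^0}$, one gets
\begin{equation*}
\left.\frac{\partial(x,y,z)}{\partial(h_2^0,h_3,t)}\right|_{h_3=0}
=-\left(\frac{t^3}{12}+\eps^2 t\right)\frac{t}{h_1^0}
=-\frac{t^2\left(\frac{t^2}{12}+\eps^2\right)}{h_1^0}>0,
\end{equation*}
so your bijective local analytic diffeomorphism is a global analytic diffeomorphism onto $\intt\A$, and $d=t\circ F^{-1}$ is real-analytic on all of $\intt\A$. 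In short: the paper's proof of (3) is shorter but rests on an unproved extension of Proposition \ref{prop:diff} beyond $D_+$, whereas your chart supplies exactly the missing verification at $\{z=0\}$; it would be a worthwhile strengthening of the paper's argument (and the same chart also tightens the proof of item (3) of Theorem \ref{th:S}).
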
 
\begin{proof}
(1)
The boundary $\partial \A$ is filled only by lightlike trajectories with zero length $l$, thus $\restr{d}{\partial \A} = 0$.

If $q_1 \in \intt \A$, then there exists an optimal trajectory $\Exp(\theta, \f, t)$, $(\theta, \f) \in \R \times (\R/2 \pi \Z)$ such that $\Exp(\theta, \f, t_1) = q_1$ for some $t_1 \in (0, + \infty)$. Then $d(q_1) = t_1 \in (0, + \infty)$. 

(2) Let $q = (x, y, 0) \in M$. If $x = |y|$, then $q \in \partial \A$, and $d(q) = 0 = \sqrt{x^2-y^2}$ according to item (1). If $x > |y|$, then $q \in \intt \A$, and the optimal trajectory has the form \eq{xyz0}. Then $d(q) = \sqrt{x^2-y^2}$.

(3) For any $q \in \intt \A$ we have the following equality:
$$
d(q) = t \ : \  \exists (\theta, \f) \in \R \times \R/(2 \pi \Z) \text{ s.t. } q = \Exp(\theta, \f, t).
$$
The mapping $\map{\Exp}{N}{D}$ is real-analytic and is a proper diffeomorphism. By the implicit function theorem the distance is a real-analytic function since it satisfies the equality $q = \Exp(\theta, \f, d(q))$.
\end{proof}

\begin{theorem}\label{th:S}
\begin{itemize}
\item[$(1)$]
$S(0) = \partial \A$.
For any $r > 0$ there holds the inclusion $S(r) \subset \intt \A$.
\item[$(2)$]
For any $r \geq 0$ we have $S(r) \cap \{z = 0\} = \{(x, y, z) \in M \mid z = 0, \ x = \sqrt{y^2 + z^2}\}$.
\item[$(3)$]
For any $r > 0$ the sphere $S(r)$ is a two-dimensional noncompact real-analytic manifold.
\end{itemize}
\end{theorem}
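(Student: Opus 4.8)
The plan is to read off items (1) and (2) from Theorem~\ref{th:d} and to spend the real work on item (3). For item (1), recall that no admissible trajectory joins $q_0$ to a point outside $\A$, so there the distance is $-\infty$ (the supremum over the empty set) and the equation $d(q)=r$ has no solutions. By Theorem~\ref{th:d}(1), $d\equiv 0$ on $\partial\A$ and $d>0$ on $\intt\A$; hence $\{d=0\}=\partial\A$, that is $S(0)=\partial\A$, and for $r>0$ every solution of $d=r$ must lie in $\intt\A$, giving $S(r)\subset\intt\A$.

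For item (2) I would restrict to the plane $\{z=0\}$, whose reachable part is the slice $\{x\ge|y|\}$ of $\A=D$. There Theorem~\ref{th:d}(2) gives $d(x,y,0)=\sqrt{x^2-y^2}$, so $S(r)\cap\{z=0\}=\{(x,y,0)\mid x\ge|y|,\ \sqrt{x^2-y^2}=r\}$; solving for $x\ge 0$ this is exactly the curve $x=\sqrt{y^2+r^2}$, $y\in\R$.

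The substance is item (3). Since $S(r)\subset\intt\A$ for $r>0$ and $\restr{d}{\intt\A}$ is real-analytic by Theorem~\ref{th:d}(3), the sphere $S(r)=(\restr{d}{\intt\A})^{-1}(r)$ is a level set of a real-analytic function on the $3$-dimensional manifold $\intt\A$; by the real-analytic implicit function theorem it is a $2$-dimensional real-analytic submanifold as soon as $r$ is a regular value, i.e.\ the differential of $d$ is nonzero at each $q\in S(r)$. This nonvanishing is \emph{the crux}, and I would obtain it from the arclength parametrization of optimal extremals: through each $q\in\intt\A$ passes an extremal $\gamma(t)=\Exp(\theta,\f,t)$, which by the optimality theorem (every extremal trajectory is optimal) is optimal on $[0,t_1]$ with $t_1=d(q)>0$ and is parametrized by Lorentzian arclength, so $l(\restr{\gamma}{[0,t]})=t$. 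As $\intt\A$ is open and $\gamma$ continuous, $\gamma(t)\in\intt\A$ for $t$ in a neighborhood of $t_1$, and optimality of each sub-arc yields $d(\gamma(t))=t$ there. Differentiating this identity at $t=t_1$ shows that the directional derivative of $d$ along $\dot\gamma(t_1)\neq 0$ equals $1$; in particular the differential of $d$ at $q$ does not vanish. Hence every $r>0$ is a regular value and $S(r)$ is a $2$-dimensional real-analytic manifold.

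Finally, noncompactness is immediate from item (2): $S(r)$ contains the curve $\{(x,y,0)\mid x=\sqrt{y^2+r^2}\}$, which is unbounded in $\R^3$ as $|y|\to\infty$, and an unbounded subset of $\R^3$ cannot be compact. The one point that needs care is ensuring the arclength identity $d(\gamma(t))=t$ holds on a full $t$-interval around $t_1$ rather than only at the endpoint, so that its $t$-derivative is genuinely $1$; this is what guarantees the differential of $d$ is nonzero and hence that $r$ is a regular value.
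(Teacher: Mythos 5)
Your proof is correct. Items (1) and (2) follow Theorem \ref{th:d} exactly as the paper does (your extra care with points outside $\A$, where the supremum defining $d$ is over the empty set, is a worthwhile refinement, and your computation in (2) yields $x=\sqrt{y^2+r^2}$, which corrects an evident typo in the statement, where $z$ should read $r$ under the square root). For item (3), however, you take a genuinely different route. The paper's proof is one line: $S(r)$ is the image of the two-dimensional parameter slice $\{t=r\}$ under the real-analytic proper diffeomorphism $\map{\Exp}{N}{D_+}$ of Proposition \ref{prop:diff} (written there as $\intt D$, glossing over the $z\leq 0$ part, which needs the symmetry $z\mapsto -z$ and the $h_3=0$ trajectories), so the manifold structure, analyticity, and noncompactness are all transported from the slice at once. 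You instead realize $S(r)$ as the level set $\left(\restr{d}{\intt\A}\right)^{-1}(r)$ of the real-analytic function $d$ and verify that $r$ is a regular value by differentiating the arclength identity $d(\Exp(\theta,\f,t))=t$ along an optimal extremal through $q$. The point you flag as delicate --- that this identity holds on a full $t$-interval around $t_1$, not just at the endpoint --- is indeed supplied by the paper's optimality theorem: every restriction or extension of an arclength-parametrized extremal is again an extremal trajectory, hence optimal, so $d(\Exp(\theta,\f,t))=t$ for all $t>0$; the chain rule then gives $\langle d\,d_q,\dot\gamma(t_1)\rangle = 1\neq 0$, and the analytic regular-value theorem finishes the argument, with noncompactness read off from the unbounded curve in item (2). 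Your approach buys a uniform treatment of all of $\intt\A$ (no case distinction between $D_+$, $D_-$, and $\{z=0\}$) together with the eikonal-type fact that $d$ has nowhere-vanishing differential on $\intt\A$, which is of independent interest; the paper's approach buys an explicit global parametrization $(\theta,\f)\mapsto\Exp(\theta,\f,r)$ of the sphere, from which its topology can also be read off directly.
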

\begin{proof}
(1) follows from item (1) of Th. \ref{th:d}.

(2) follows from item (2) of Th. \ref{th:d}.

(3) follows since the mapping $\map{\Exp}{N}{\intt D}$ is real-analytic, proper, and diffeomorphic.
\end{proof}

\subsection{Convergence of the problem as $\eps \to 0$}
In this subsection $\eps$ is a variable parameter. Let us denote problem \eq{pr1}--\eq{pr3} as~$P_{\eps}$, and use the subscript $\eps$ to denote the objects related to this problem: the attainable set $\A_{\eps}$, the exponential mapping $\Exp_{\eps}$, the spheres $S_{\eps}(r)$. 

When $\eps \to 0$, the statement of the problem $P_{\eps}$ tends to the following statement:
\begin{align}
&\dot q = u_1 X_1 + u_2X_2, \qquad q \in M, \quad u_1 \geq |u_2|, \label{pr01}\\
&q(0) = q_0, \qquad q(t_1) = q_1, \\
&l = \int_0^{t_1} \sqrt{u_1^2 - u_2^2} dt
\to \max,
\end{align}
let us denote this problem as $P_0$.

$P_0$ is a sub-Lorentzian problem on the Heisenberg group studied in the papers \cite{groch4, groch6, sl_heis}. The aim of this subsection --- to study, in which sense the objects $\A_{\eps}$, $\Exp_{\eps}$, $S_{\eps}(r)$ for the problem $P_{\eps}$ tend respectively to the objects $\A_0$, $\Exp_0$, $S_0(r)$ of the problem $P_0$ as $\eps \to 0$.

\subsubsection{Convergence of control system and future cone}
The right-hand side of the control system for the problem $P_{\eps}$ tends as $\eps \to 0$ to the right-hand side of the problem $P_0$:
$$
\forall q \in M \ \forall u \in U \  \lim_{\eps\to 0} (u_1 X_1(q) + u_2 X_2(q) + \eps u_3X_3(q)) = u_1 X_1(q) + u_2 X_2(q).
$$

Consider the future cones of the problems $P_{\eps}$:
\begin{align*}
&C_{\eps} = \left\{v \in T_{q_0}M \mid dx^2 \geq dy^2 + \frac{dz^2}{\eps^2}, \ dx \geq 0 \right\}, \qquad \eps > 0,\\
&C_{0} = \left\{v \in T_{q_0}M \mid dx^2 \geq dy^2, \ dx \geq 0, \ dz = 0 \right\}.
\end{align*}

Denote by $\chi_A$ the characteristic function of a set $A$.

\begin{proposition}
\begin{itemize}
\item[$(1)$]
For any $0 \leq \eps_1 < \eps_2$ we have $C_{\eps_1} \subset C_{\eps_2}$.
\item[$(2)$]
For any $v \in T_{q_0}M$ we have $\lim_{\eps\to 0}\chi_{C_{\eps}}(v) = \chi_{C_0}(v)$.
\end{itemize}
\end{proposition}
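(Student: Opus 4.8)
The plan is to work entirely in the coordinates that the frame $\partial/\partial x, \partial/\partial y, \partial/\partial z$ induces on $T_{q_0}M$, writing $v = (a,b,c)$ so that $dx(v) = a$, $dy(v) = b$, $dz(v) = c$. In these coordinates the defining conditions read
$$
C_\eps = \{(a,b,c) \mid a \geq 0, \ a^2 \geq b^2 + c^2/\eps^2\} \ (\eps > 0), \qquad C_0 = \{(a,b,c) \mid a \geq 0, \ a^2 \geq b^2, \ c = 0\}.
$$
Everything then reduces to elementary manipulations of these inequalities.

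For item (1) I would note that the $\eps$-dependence of $C_\eps$ enters only through the coefficient $1/\eps^2$, which is strictly decreasing in $\eps$. Hence for $0 < \eps_1 < \eps_2$ and any $(a,b,c) \in C_{\eps_1}$ one has $a^2 \geq b^2 + c^2/\eps_1^2 \geq b^2 + c^2/\eps_2^2$ together with $a \geq 0$, so $(a,b,c) \in C_{\eps_2}$. The boundary case $\eps_1 = 0$ is treated separately: if $(a,b,c) \in C_0$ then $c = 0$, the term $c^2/\eps_2^2$ vanishes, and $a^2 \geq b^2 = b^2 + c^2/\eps_2^2$, whence $(a,b,c) \in C_{\eps_2}$. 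This gives both $C_0 \subset C_{\eps_2}$ and $C_{\eps_1} \subset C_{\eps_2}$, establishing the claimed monotonicity.

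For item (2) I would compute the pointwise limit by distinguishing whether $v \in C_0$. If $v \in C_0$, then by item (1) we have $v \in C_\eps$ for every $\eps > 0$, so $\chi_{C_\eps}(v) = 1 = \chi_{C_0}(v)$ for all $\eps$ and the limit is immediate. If $v \notin C_0$, the defect is of one of two types. When $a < 0$, or when $a \geq 0$ but $a^2 < b^2$, the point fails a condition that does not involve the $\eps$-term (note $b^2 + c^2/\eps^2 \geq b^2 > a^2$ in the second situation), so $v$ lies outside every $C_\eps$ and $\chi_{C_\eps}(v) \equiv 0$. The only remaining possibility is $a \geq 0$, $a^2 \geq b^2$, but $c \neq 0$; here $c^2/\eps^2 \to +\infty$ as $\eps \to 0$, so for all sufficiently small $\eps$ one has $b^2 + c^2/\eps^2 > a^2$ and hence $v \notin C_\eps$. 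In every case $\chi_{C_\eps}(v) \to 0 = \chi_{C_0}(v)$.

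The argument is elementary and reduces to a finite case analysis, so there is no serious obstacle. The only point requiring care is the bookkeeping in item (2): one must cleanly separate the defects that are insensitive to $\eps$ (failure of $a \geq 0$ or of $a^2 \geq b^2$) from the genuinely $\eps$-driven failure caused by a nonzero $dz$-component making the term $c^2/\eps^2$ blow up. It is worth emphasizing that the convergence obtained is merely pointwise and cannot be uniform near the boundary of the cones, which is precisely why the statement is formulated through characteristic functions rather than through a stronger notion of set convergence.
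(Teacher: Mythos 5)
Your proof is correct and takes essentially the same approach as the paper: item (1) by direct comparison of the defining inequalities, and item (2) by a pointwise case analysis whose key point is that a nonzero $dz$-component forces $dz^2/\eps^2 \to +\infty$ as $\eps \to 0$, while the other defects ($dx < 0$ or $dx^2 < dy^2$) exclude $v$ from every $C_\eps$ simultaneously. The only cosmetic difference is that you enumerate the defect types of $v \notin C_0$ directly, whereas the paper splits on whether $v$ lies in some $C_{\eps_0}$ and then rules out $dz(v) = 0$ by contradiction.
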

\begin{proof}
(1) follows immediately from definitions of the cones $C_{\eps}$.

(2) If $v \in C_0$, then $\chi_{C_0}(v) = \chi_{C_{\eps}}(v) = 1$ for any $\eps > 0$.

If for some $\eps_0 > 0$ we have $v \in C_{\eps} \setminus C_0$, then $\chi_{C_0}(v) = 0$, $\chi_{C_{\eps}}(v) = 1$. Then $dz(v) \neq 0$ or $(dx^2-dy^2)(v) < 0$. If $dz(v) \neq 0$, then $\lim_{\eps\to 0} \frac{dz(v)}{\eps} = + \infty$, and for some $\eps_1 > 0$ we have $v \notin C_{\eps_1}$. Thus for all $\eps \in (0, \eps_1)$ we have $v \notin C_{\eps}$, i.e., $\chi_{C_{\eps}}(v) = 0$.

If $dz(v) = 0$, then $dy^2(v) > dx^2(v) \geq dy^2(v)$, a contradiction.

Finally, if $v \notin C_{\eps}$ for some $\eps > 0$, then $v \notin C_0$, and $\chi_{C_{\eps}}(v) = \chi_{C_0}(v) = 0$.
\end{proof}

\subsubsection{Convergence of the attainable set}

\begin{lemma}\label{lem:Ae1e2}
For any $0 \leq \eps_1 < \eps_2$ there holds the inclusion $\A_{\eps_1} \subset \A_{\eps_2}$.
\end{lemma}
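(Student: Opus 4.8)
The plan is to prove the inclusion directly at the level of trajectories, by showing that every admissible trajectory of $P_{\eps_1}$ becomes, after a linear rescaling of its third control component, an admissible trajectory of $P_{\eps_2}$ with the same initial point and the same endpoint. This keeps the argument purely at the level of reachability and avoids any monotonicity computation involving the explicit description of $\A$ from Theorem~\ref{th:attain}.

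Concretely, I would first treat $0 < \eps_1 < \eps_2$. Let $q(\cdot)$ be a trajectory of system \eq{pr1} on $[0, t_1]$ with $q(0) = q_0$, corresponding to a control $u = (u_1, u_2, u_3) \in U$, and set $\tilde u = (u_1, u_2, \tfrac{\eps_1}{\eps_2} u_3)$. The velocity fields then coincide pointwise,
$$
u_1 X_1 + u_2 X_2 + \eps_1 u_3 X_3 = u_1 X_1 + u_2 X_2 + \eps_2 \tilde u_3 X_3,
$$
so the very same curve $q(\cdot)$ solves system \eq{pr1} for the parameter $\eps_2$ with control $\tilde u$. It remains only to check admissibility, i.e. $\tilde u \in U$: since $\eps_1 < \eps_2$ we have $\tilde u_3^2 = (\eps_1/\eps_2)^2 u_3^2 \le u_3^2$, whence
$$
\sqrt{u_2^2 + \tilde u_3^2} \le \sqrt{u_2^2 + u_3^2} \le u_1,
$$
the last inequality being the constraint $u \in U$. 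Thus $q(t_1) \in \A_{\eps_2}$, and as $q(\cdot)$ was arbitrary this yields $\A_{\eps_1} \subset \A_{\eps_2}$.

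For the boundary case $\eps_1 = 0$ the same idea applies with $\tilde u = (u_1, u_2, 0)$: a trajectory of $P_0$ governed by $\dot q = u_1 X_1 + u_2 X_2$ with $u_1 \ge |u_2|$ is precisely a trajectory of system \eq{pr1} for $\eps = \eps_2$ with vanishing third control, and the constraint $u_1 \ge |u_2| = \sqrt{u_2^2 + 0}$ is exactly $\tilde u \in U$. Hence $\A_0 \subset \A_{\eps_2}$, completing the statement. I expect no real obstacle here; the only points needing a word of care are the measurability of $\tilde u$ (immediate, since it is a linear image of the measurable control $u$) and the degenerate case $\eps_1 = 0$, handled separately above. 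An alternative route would compare the explicit sets $D_{\eps_1}$ and $D_{\eps_2}$ of Theorem~\ref{th:attain} by verifying that $\eps \mapsto \tfrac{\eps^2}{2}(\sinh\tau + \tau)$, with $\cosh\tau = \tfrac{x^2 - y^2}{2\eps^2} + 1$, is nondecreasing in $\eps$ for fixed $x \ge |y|$; but that requires a monotonicity computation that the trajectory-rescaling argument sidesteps entirely.
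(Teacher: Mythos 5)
Your proof is correct, but it takes a genuinely different route from the paper. The paper works with the explicit descriptions of the attainable sets: it recalls from \cite{groch6} that $\A_0 = \{x \geq 0, \ 4|z| \leq x^2 - y^2\}$, writes $\A_{\eps} = \{x \geq |y|, \ |z| \leq \f_{\eps}(x,y)\}$ with $\f_{\eps}(x,y) = \frac{\eps^2}{2}(\sinh\tau + \tau)$, $\tau = \arcosh\bigl(\frac{x^2-y^2}{2\eps^2}+1\bigr)$, and then deduces the inclusion from the pointwise monotonicity $\f_{\eps_1}(x,y) < \f_{\eps_2}(x,y)$ (stated as ``easy to see,'' though it does require a small computation, e.g. checking that $\frac{d}{ds}\bigl[\frac{s}{2}\arcosh(\frac{a}{s}+1)\bigr] = \frac12\bigl(\tau - \tanh\frac{\tau}{2}\bigr) > 0$). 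You instead argue at the level of trajectories: the rescaling $u_3 \mapsto \frac{\eps_1}{\eps_2}u_3$ shows that the set of admissible velocities for $P_{\eps_1}$ is contained in that for $P_{\eps_2}$, so every admissible trajectory of $P_{\eps_1}$ is literally an admissible trajectory of $P_{\eps_2}$, and the inclusion of attainable sets is immediate; the case $\eps_1 = 0$ fits the same scheme with $\tilde u_3 = 0$. Your argument is more elementary and more robust: it needs neither Theorem~\ref{th:attain} nor the formula for $\A_0$ from \cite{groch6}, and it sidesteps the monotonicity claim entirely. What the paper's computation buys in exchange is quantitative information on how the boundaries $|z| = \f_{\eps}(x,y)$ nest, and this is precisely what is reused immediately afterwards in the proof of the convergence theorem (where $f_{\eps_0}(q) \leq 0$, $f_0(q) > 0$ and $\lim_{\eps\to 0}\f_{\eps} = \f_0$ are invoked); so the paper's route earns its keep there, while yours is the cleaner proof of the lemma as stated.
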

\begin{proof}
As shown in work \cite{groch6}, 
$$
\A_0 = \{(x, y, z) \in M \mid x \geq 0, \ 4 |z| \leq x^2-y^2\}.
$$
Denote for any $\eps > 0$
\begin{align*}
&f_{\eps}(q) = |z| - \f_{\eps}(x, y), \\
&\f_{\eps}(x, y) = \frac{\eps^2}{2}(\sinh \tau + \tau), \qquad \tau = \arcosh\left(\frac{x^2-y^2}{2\eps^2}+1\right)
\end{align*}
and
\begin{align*}
&f_0(q) = |z| - \f_0(x, y), \\
&\f_0(x, y) = \frac{x^2-y^2}{4},
\end{align*}
then
$$
\A_{\eps} = \{q = (x, y, z)\in M \mid x \geq |y|, \ f_{\eps}(q) \leq 0 \}, \qquad \eps \geq 0.
$$
It is easy to see that $\f_{\eps}(x, y) > \f_0(x, y)$ for $x > |y|$, $\eps > 0$, thus $f_{\eps}(q) \leq f_0(q)$, $\eps > 0$, for $x \geq |y|$. Thus $\A_0 \subset \A_{\eps}$, $\eps > 0$. 

Now let $0 < \eps_1 < \eps_2$. It is easy to see that $\f_{\eps_1}(x, y) < \f_{\eps_2}(x, y)$, thus $f_{\eps_1}(q) > f_{\eps_2}(q)$ for $x > |y|$. Consequently, $\A_{\eps_1}\subset \A_{\eps_2}$.
\end{proof}

Boundaries of the attainable sets $\A_{\eps_1}\subset \A_{\eps_2}$ are shown for $\eps_1 = 0$, $\eps_2 = 1$ in Fig. \ref{fig:A01}, and for $\eps_1 = 1$, $\eps_2 = 2$ in Fig. \ref{fig:A12}.

\twofiglabelsize
{A01}{Boundaries of the attainable sets $\A_{0}$ and $\A_{1}$}{fig:A01}{0.6}
{A12}{Boundaries of the attainable sets $\A_{1}$ and $\A_{2}$}{fig:A12}{0.6}

\begin{theorem}
For any point $q \in M$ there holds the identity
\be{lim}
\lim_{\eps \to 0} \chi_{A_{\eps}}(q) = \chi_{A_{0}}(q).
\ee
\end{theorem}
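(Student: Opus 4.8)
The plan is to exploit the monotonicity of the family $\A_{\eps}$ established in Lemma~\ref{lem:Ae1e2} together with a pointwise asymptotic analysis of the profile function $\f_{\eps}$ as $\eps \to 0$. Fix $q = (x,y,z) \in M$. Recall that $\A_{\eps} = \{x \geq |y|,\ |z| \leq \f_{\eps}(x,y)\}$ for every $\eps \geq 0$, and that Lemma~\ref{lem:Ae1e2} gives the inclusion $\A_0 \subset \A_{\eps}$ for all $\eps > 0$. Hence if $q \in \A_0$, then $q \in \A_{\eps}$ for every $\eps > 0$, so $\chi_{\A_{\eps}}(q) = 1 = \chi_{\A_0}(q)$ and \eqref{lim} holds trivially. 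It therefore remains to treat the case $q \notin \A_0$, where the goal is to show that $q \notin \A_{\eps}$ for all sufficiently small $\eps > 0$, which yields $\lim_{\eps\to 0}\chi_{\A_{\eps}}(q) = 0 = \chi_{\A_0}(q)$.

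If the horizontal condition already fails, i.e. $x < |y|$, then $q \notin \A_{\eps}$ for every $\eps \geq 0$ and there is nothing to prove. If $x = |y|$, then $\tau = \arcosh 1 = 0$ and $\f_{\eps}(x,y) = 0$ for all $\eps$, so $q \in \A_{\eps}$ if and only if $z = 0$, independently of $\eps$, which again matches $\A_0$. The remaining, essential case is $x > |y|$ together with $|z| > \f_0(x,y) = \frac{x^2-y^2}{4}$. For such a point membership in $\A_{\eps}$ is governed solely by the inequality $|z| \leq \f_{\eps}(x,y)$, so it suffices to prove the pointwise convergence $\f_{\eps}(x,y) \to \f_0(x,y)$ as $\eps \to 0$: once this is known, since $|z| > \f_0(x,y)$, there is $\eps^* > 0$ with $\f_{\eps}(x,y) < |z|$ for all $\eps \in (0,\eps^*)$, whence $q \notin \A_{\eps}$ on that range.

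The main step is thus the limit $\lim_{\eps\to 0}\f_{\eps}(x,y) = \f_0(x,y)$ for fixed $x > |y|$, which I would compute explicitly. Writing $a = \frac{x^2-y^2}{2} > 0$, the defining relation $\cosh\tau = \frac{a}{\eps^2}+1$ gives $\sinh\tau = \frac{1}{\eps^2}\sqrt{a^2 + 2a\eps^2}$, so that
\[
\f_{\eps}(x,y) = \frac{\eps^2}{2}\sinh\tau + \frac{\eps^2}{2}\tau = \frac12\sqrt{a^2 + 2a\eps^2} + \frac{\eps^2}{2}\arcosh\!\left(\frac{a}{\eps^2}+1\right).
\]
As $\eps \to 0$ the first term tends to $\frac{a}{2} = \frac{x^2-y^2}{4} = \f_0(x,y)$, while in the second term $\arcosh(\frac{a}{\eps^2}+1)$ grows only logarithmically in $1/\eps$, so $\eps^2\,\arcosh(\frac{a}{\eps^2}+1) \to 0$. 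This proves $\f_{\eps}(x,y) \to \f_0(x,y)$ and completes the argument. The convergence is in fact monotone decreasing, consistent with the chain $\f_0 < \f_{\eps_1} < \f_{\eps_2}$ recorded in Lemma~\ref{lem:Ae1e2}, so the existence of $\eps^*$ could alternatively be deduced from monotonicity and the value of the infimum.

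I expect the asymptotic estimate of $\f_{\eps}$ to be the only real content, the rest being bookkeeping of the two degenerate boundary cases $x < |y|$ and $x = |y|$. The one subtlety worth flagging is that the convergence of characteristic functions in \eqref{lim} is merely pointwise and not uniform; but since the whole argument is carried out for a single fixed $q$, no uniformity is required.
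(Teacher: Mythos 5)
Your proof is correct and follows essentially the same route as the paper: the inclusion $\A_0 \subset \A_{\eps}$ from Lemma~\ref{lem:Ae1e2} settles the case $q \in \A_0$, and for $q \notin \A_0$ everything reduces to the pointwise limit $\lim_{\eps\to 0}\f_{\eps}(x,y) = \f_0(x,y)$, exactly as in the paper's argument. The only difference is cosmetic bookkeeping (you split by the geometry of $q$, the paper fixes an auxiliary $\eps_0$ and uses monotonicity twice), plus the welcome bonus that you actually verify the limit via the identity $\sinh\tau = \frac{1}{\eps^2}\sqrt{a^2+2a\eps^2}$ and the logarithmic growth of $\arcosh$, whereas the paper merely asserts it.
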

\begin{proof}
If $q \in \A_0$, then $\chi_{\A_0}(q) = \chi_{\A_{\eps}}(q) = 1$ since $\A_0 \subset \A_{\eps}$, and equality \eq{lim} holds.

Let $q \notin \A_0$, then $\chi_{A_0}(q) = 0$. Fix any $\eps_0 > 0$. If $q \notin \A_{\eps_0}$, then $\chi_{\eps_0}(q) = 0$. For any $\eps \in (0, \eps_0)$ we have $\A_{\eps} \subset \A_{\eps_0}$, whence $\chi_{\A_{\eps}}(q) = 0$, and equality \eq{lim} holds.

Let $q = (x, y, z) \in \A_{\eps_0}$. Then $x \geq |y|$, $f_{\eps_0}(q) \leq 0$, $f_0(q) > 0$. Thus $\f_0(x, y) < |z| \leq \f_{\eps}(x, y)$. By virtue of the equality $\lim_{\eps \to 0} \f_{\eps}(x, y) = \f_{0}(x, y)$, there exists such $\eps_1 \in (0, \eps_0)$ that $\f_{\eps_1}(x, y) \leq |z|$. Then $f_{\eps}(q) > 0$, i.e., $q \notin \A_{\eps_1}$, and $\chi_{\A_{\eps_1}}(q) = 0$. For any $\eps \in (0, \eps_1)$ we have $\chi_{A_{\eps}}(q) = 0$, and equality \eq{lim} holds.
\end{proof}

So for $\eps \to 0$ there holds the relation $\A_{\eps} \to \A_{0}$ in the sense $\chi_{\A_{\eps}}(q) \to \chi_{\A_{0}}(q)$ for any $q \in M$. 

\subsubsection{Convergence of the exponential mapping}
As shown in work \cite{sl_heis}, the exponential mapping $\Exp_0$ in the problem $P_0$
is given as follows:
\begin{align*}
&\map{\Exp_0}{C_0 \times \R_{+t}}{M}, \qquad C_0 = T_{q_0}^*M \cap \{H_0 = -1/2\}, \\
&H_0(h_1, h_2,h_3) = \frac 12 (-h_1^2 + h_2^2), \\
&\map{\Exp_0}{(\theta, c, t)}{(x, y, z)}, \\
&h_1 = - \cosh \psi, \qquad h_2 = \sinh \psi, \qquad h_3 = c,
\intertext{where for $c = 0$}
&x = t \cosh \psi, \qquad y = t \sinh \psi, \qquad z = 0, \\
\intertext{and for $c \neq 0$}
&x = (\sinh(\psi + c t)-\sinh \psi)/c, \\
&y = (\cosh(\psi + ct)-\cosh \psi)/c, \\
&z = (\sinh ct = ct)/(2 c^2).
\end{align*}
In the problem $P_{\eps}$ normal extremal trajectories are parametrised by points of the hyperboloid
\begin{align*}
&C_{\eps} = T^*_{q_0}M \cap \{H_{\eps} = - 1/2\}, \\
&H_{\eps} = \frac 12 (-h_1^2+ h_2^2 + \eps^2 h_3^2), \\
&h_1 = - \cosh \theta, \qquad h_2 = \sinh \theta \cos \f, \qquad \eps h_3 = \sinh \theta \sin \f.
\end{align*}
Let us transfer the coordinates $(\psi, c)$ from the surface $C_0$ to the surface $C_{\eps}$ as follows:
$$
\sinh \theta \cos \f = \sinh \psi, \qquad \frac{\sinh \theta \sin \f}{\eps} = c,
$$
as a result we get functions $\theta(\psi, c)$ and $\f(\psi, c)$.

\begin{theorem}\label{th:Expe}
For any $(\psi, c, t) \in C_0 \times \R_{+t}$ we have
$$
\lim_{\eps\to 0} \Exp_{\eps}(\theta(\psi, c), \f(\psi, c), t) = \Exp_0(\psi, c, t).
$$
\end{theorem}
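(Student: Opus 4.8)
The plan is to substitute the coordinate transfer
$$
\sinh\theta\cos\f = \sinh\psi, \qquad \frac{\sinh\theta\sin\f}{\eps} = c
$$
directly into the explicit formulas for $x$, $y$ and \eqref{zt} defining $\Exp_{\eps}$ (for $h_3\neq0$), together with \eqref{xyz0} (for $h_3=0$), and to pass to the limit $\eps\to0$ coordinate by coordinate. The remark that makes everything transparent is that the transfer is engineered so that two of the three relevant quantities are preserved exactly, for every $\eps$: by definition $h_2^0=\sinh\theta\cos\f=\sinh\psi$ and $h_3=(\sinh\theta\sin\f)/\eps=c$. Hence $\tau=h_3t=ct$ is $\eps$-independent as well, and the sole quantity carrying the $\eps$-dependence is
$$
h_1^0=-\cosh\theta=-\sqrt{1+\sinh^2\theta}=-\sqrt{\cosh^2\psi+\eps^2c^2},
$$
where I used $\sinh^2\theta=(\sinh\theta\cos\f)^2+(\sinh\theta\sin\f)^2=\sinh^2\psi+\eps^2c^2$. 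Thus $h_1^0\to-\cosh\psi$ as $\eps\to0$.

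First I would treat the generic case $c\neq0$. Substituting $h_2^0=\sinh\psi$, $h_3=c$, $\tau=ct$ into the formulas for $x$ and $y$ of $\Exp_{\eps}$ and letting $h_1^0\to-\cosh\psi$, the hyperbolic addition identities $\sinh(\psi+ct)=\sinh\psi\cosh ct+\cosh\psi\sinh ct$ and $\cosh(\psi+ct)=\cosh\psi\cosh ct+\sinh\psi\sinh ct$ collapse the limits exactly into the expressions $(\sinh(\psi+ct)-\sinh\psi)/c$ and $(\cosh(\psi+ct)-\cosh\psi)/c$ of $\Exp_0$. For the third coordinate, \eqref{zt} reads $z=\frac{1}{2c^2}(\sinh ct-ct)+\frac{\eps^2}{2}(\sinh ct+ct)$; since $\tau=ct$ is fixed, the second summand is $O(\eps^2)$ and vanishes, leaving $z\to(\sinh ct-ct)/(2c^2)$, the $z$-component of $\Exp_0$.

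Next I would dispose of the degenerate case $c=0$. Here the transfer forces $\sinh\theta\sin\f=0$, so $h_3=0$ and $\Exp_{\eps}$ is computed from the branch \eqref{xyz0}, which contains no $\eps$ whatsoever; with $h_1=-\cosh\psi$, $h_2=\sinh\psi$ it coincides termwise with the $c=0$ branch of $\Exp_0$, and the limit is an identity.

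The only genuine care needed concerns the well-definedness of the transfer $(\psi,c)\mapsto(\theta,\f)$, and this is where I expect whatever minor friction there is to lie. From $\sinh^2\theta=\sinh^2\psi+\eps^2c^2$ the value $\theta=\theta(\psi,c)\geq0$ is determined uniquely, and the angle $\f$ is recovered from $\cos\f=\sinh\psi/\sinh\theta$, $\sin\f=\eps c/\sinh\theta$ whenever $\sinh\theta>0$, i.e.\ away from the single point $(\psi,c)=(0,0)$. At that point $\theta=0$ and $\f$ is undetermined, but $h_2^0=h_3=0$ forces $\Exp_{\eps}$ onto the ray $x=t$, $y=z=0$ for every $\eps$, which coincides with $\Exp_0(0,0,t)$, so the asserted limit holds there too. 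Once the transfer is in place each coordinate of $\Exp_{\eps}$ is a continuous function of $h_1^0$ with $h_2^0$, $h_3$, $\tau$ held fixed, and the limit is obtained by the single substitution $h_1^0\to-\cosh\psi$; no serious obstacle arises beyond this bookkeeping.
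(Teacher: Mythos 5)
Your proof is correct and takes the same route as the paper: the paper's entire proof is the one-line assertion ``Immediate computation on the basis of explicit parametrisations of the exponential mappings,'' and your argument is exactly that computation carried out in full. Your key observation --- that the transfer preserves $h_2^0=\sinh\psi$, $h_3=c$, $\tau=ct$ exactly, so the only $\eps$-dependence sits in $h_1^0=-\sqrt{\cosh^2\psi+\eps^2c^2}\to-\cosh\psi$ --- together with your treatment of the degenerate cases $c=0$ and $(\psi,c)=(0,0)$, supplies precisely the details the paper omits.
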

\begin{proof}
Immediate computation on the basis of explicit parametrisations of the exponential mappings.
\end{proof}

So for $\eps \to 0$ parametrisations of extremal trajectories of the problems $P_{\eps}$ tend to parametrisation of extremal trajectories for the problem $P_0$ in the sense of Theorem \ref{th:Expe}.

\subsubsection{Convergence of spheres}
In problems $P_{\eps}$ and $P_0$ spheres of radius $r > 0$ are parametrised by the exponential mappings:
\begin{align*}
&S_{\eps}(r) = \{\Exp_{\eps}(\theta, \f, r) \mid \theta \in \R, \ \f \in \R/(2 \pi \Z)\}, \qquad \eps > 0, \\
&S_{\eps}(0) = \{\Exp_{\eps}(\psi, c, r) \mid \psi, c \in \R\}.
\end{align*}

Let us recall several definitions on convergence of sequences of sets \cite{aubin}.
Let a family of sets $A_{\eps}$, $\eps \geq 0$, be given in a metric space $(M, \rho)$. The lower and upper limits of the family $A_{\eps}$ as $\eps \to 0$ are defined, respectively, as
$$
\liminf_{\eps \to 0} A_{\eps} := \{ q \in M \mid \lim_{\eps \to 0} \rho(q, A_{\eps}) = 0\}
$$
and
$$
\limsup_{\eps \to 0} A_{\eps} := \{ q \in M \mid \liminf_{\eps \to 0} \rho(q, A_{\eps}) = 0\},
$$
where $\rho(q, A) := \inf_{p \in A} \rho(q, p)$. A family $A_{\eps}$ is called lower (upper) semicontinuous as $\eps \to 0$ in the sense of Kuratowski if $A_0 \subset \liminf_{\eps\to 0} A_{\eps}$ (respectively, $A_0 \supset \limsup_{\eps\to 0} A_{\eps}$). Finally, a family of sets is continuous if it is lower and upper semicontinuous.

\begin{theorem}\label{th:Se}
Let $r > 0$. 
\begin{itemize}
\item[$(1)$]
If $\eps \to 0$, then parametrisation of the spheres $S_{\eps}(r)$ tends to parametrisation of the sphere in the sense that
$$
\lim_{\eps\to 0} \Exp_{\eps}(\theta, \f, r) = \Exp_0(\psi, c, r).
$$ 
\item[$(2)$]
The family $S_{\eps}$ is lower semicontinuous as $\eps \to 0$ in the sense of Kuratowski.
\end{itemize}
\end{theorem}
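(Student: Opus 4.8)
The plan is to derive both statements directly from the pointwise convergence of the exponential mappings established in Theorem \ref{th:Expe}, so that essentially no new computation is required; the whole theorem is a repackaging of that convergence together with the definition of the Kuratowski lower limit.

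For part (1), I would observe that the claimed limit is precisely the special case $t = r$ of Theorem \ref{th:Expe}. Indeed, the sphere $S_{\eps}(r)$ is parametrised by $\Exp_{\eps}(\theta, \f, r)$ and the sphere $S_0(r)$ by $\Exp_0(\psi, c, r)$, while the coordinate transfer $\theta = \theta(\psi, c)$, $\f = \f(\psi, c)$ is exactly the one used in Theorem \ref{th:Expe}. Fixing $t = r$ in that theorem therefore yields $\lim_{\eps\to 0}\Exp_{\eps}(\theta(\psi,c), \f(\psi,c), r) = \Exp_0(\psi, c, r)$, which is the assertion of part (1).

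For part (2), I would unpack the definition of the Kuratowski lower limit and prove the inclusion $S_0(r) \subset \liminf_{\eps\to 0} S_{\eps}(r)$. Take an arbitrary $q \in S_0(r)$ and write $q = \Exp_0(\psi, c, r)$ for suitable $(\psi, c)$. For each $\eps > 0$ the point $q_{\eps} := \Exp_{\eps}(\theta(\psi,c), \f(\psi,c), r)$ lies in $S_{\eps}(r)$, and by part (1) we have $q_{\eps} \to q$ as $\eps \to 0$. Consequently
$$
\rho(q, S_{\eps}(r)) = \inf_{p \in S_{\eps}(r)} \rho(q, p) \leq \rho(q, q_{\eps}) \to 0,
$$
so $\lim_{\eps\to 0} \rho(q, S_{\eps}(r)) = 0$, i.e.\ $q \in \liminf_{\eps\to 0} S_{\eps}(r)$. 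Since $q$ was arbitrary, this gives $S_0(r) \subset \liminf_{\eps\to 0} S_{\eps}(r)$, which is exactly lower semicontinuity in the sense of Kuratowski.

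I do not expect a genuine obstacle here, since all the analytic content is already contained in Theorem \ref{th:Expe}; the remaining work is to match the two parametrisations correctly and to unwind the Kuratowski definition. The only point requiring slight care is confirming that the approximating points $q_{\eps}$ genuinely belong to $S_{\eps}(r)$, that is, that the coordinate transfer produces admissible parameters $(\theta, \f)$ for every $\eps > 0$; this follows because the map $(\psi, c) \mapsto (\theta(\psi,c), \f(\psi,c))$ sends $C_0$ into the parameter domain of $\Exp_{\eps}$. It is also worth noting why one should expect only the lower half: since the attainable sets are nested, $\A_{\eps_1} \subset \A_{\eps_2}$ for $\eps_1 < \eps_2$ by Lemma \ref{lem:Ae1e2}, the spheres $S_{\eps}(r)$ may extend beyond $S_0(r)$, so the upper Kuratowski limit can be strictly larger than $S_0(r)$, and upper semicontinuity need not hold.
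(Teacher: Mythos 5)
Your proposal is correct and follows essentially the same route as the paper: part (1) is the specialization $t = r$ of Theorem \ref{th:Expe}, and part (2) takes $q = \Exp_0(\psi, c, r) \in S_0(r)$, approximates it by $q_{\eps} = \Exp_{\eps}(\theta(\psi,c), \f(\psi,c), r) \in S_{\eps}(r)$, and invokes the convergence $q_{\eps} \to q$ to conclude $S_0(r) \subset \liminf_{\eps\to 0} S_{\eps}(r)$. Your explicit unwinding of $\rho(q, S_{\eps}(r)) \leq \rho(q, q_{\eps}) \to 0$ and the closing remark on why upper semicontinuity is not claimed are harmless elaborations of the paper's more terse argument.
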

\begin{proof}
(1) follows from Th. \ref{th:Expe}.

(2) Let $q \in S_0(r)$, then $q = \Exp(\psi, c, r)$ for some $\psi, c \in \R$. Let $q_{\eps} = \Exp_{\eps}(\theta(\psi, c), \f(\psi, c), r) \in S_{\eps}(r)$. By Theorem \ref{th:Expe} we have $\lim_{\eps\to 0} q_{\eps} = q$, thus $S_0 \subset \liminf_{\eps\to 0} S_{\eps}$.
\end{proof}

So we proved that the family of spheres $S_{\eps}(r)$ tends to the sphere $S_0(r)$ as $\eps \to 0$ in the sense of Th. \ref{th:Se}.

Figures \ref{fig:S0}, \ref{fig:S01} suggest that $S_{\eps}$ is continuous as $\eps \to 0$ in the sense of Kuratowski.

\twofiglabelsize
{Se0}{The sphere $S_0(1)$}{fig:S0}{0.6}
{Se01}{The sphere $S_{0.1}(1)$}{fig:S01}{0.6}

\section{The second family of Lorentzian problems} \label{sec:second}
\subsection{Problem statement}
Consider the following family of Lorentzian problems on the Heisenberg group:
\begin{align}
&\dot q = u_1 X_1 + u_2 X_2 + \eps u_3 X_3, \qquad q \in M, \quad \eps > 0, \label{pr31}\\
&u \in U = \{u_3 \geq \sqrt{u_1^2+u_2^2}\}, \label{pr22}\\
&q(0) = q_0, \qquad q(t_1) = q_0, \label{pr23}\\
&l = \int_{0}^{t_1} \sqrt{u_3^2 - u_1^2 - u_3^2} dt \to \max. \label{pr25}
\end{align}
A characteristic property of these problems is that the union $C \cup C^-$ of the future cone
$$
C = \{ v \in T_{q_0}M \mid g(v) \leq  0\}, \qquad g = \om_1^2 + \om_2^2 - \frac{\om_3^2}{\eps^2}
$$
and the past cone
$
C^- = - C
$
contains in its interior the commutant $\R X_3(q_0)$ of the Heisenberg algebra, unlike the first family considered in Section \ref{sec:first}.
 We will show in this section that this condition changes radically the properties of a problem.

\subsection{Pontryagin maximum principle in the geometric form}
Application of the Pontryagin maximum principle in the geometric form \cite{notes} gives the following statement: if a trajectory $q(t)$, $t \in [0, t_1]$, comes to the boundary of the attainable set, then the point $q(t_1)$ belongs to the surface $S = \{(x, y, z) \in M \mid h_2 + h_2^2 - \eps^2 h_3^2 = - 1, \ -h_3 \leq 0 \}$, where 
\begin{align*}
&x = (h_1 \sin h_3 + h_2 (\cos h_3-1))/h_3, \\
&y = (h_2 \sin h_3 - h_1 (\cos h_3 - 1))/h_3, \\
&z = \frac{\eps^2 h_3^2-1}{2 h_3^2}(h_3 - \sin h_3) - \eps^2 h_3^2.
\end{align*}
The surface $S$ for $\eps = 1$ is shown in Fig. \ref{fig:S2}. It does not divide $M$ into two connected domains. It is a marker of global controllability of the system, see Subsec. \ref{subsec:attain2}.

\onefiglabelsizen{S2.pdf}{The surface $S$  for $\eps = 1$}{fig:S2}{6}

\subsection{Global controllability}\label{subsec:attain2}
\begin{theorem}\label{th:attain2}
System \eq{pr31}--\eq{pr23} has a periodic trajectory and is globally controllable.
\end{theorem}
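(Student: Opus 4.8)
The plan is to prove global controllability directly, by showing that the attainable set $\A$ from $q_0 = \Id$ for nonnegative time is all of $M$; since the structure is left-invariant this yields $\A_q = M$ for every $q$, and a periodic trajectory will fall out as a byproduct. The starting observation is that the control cone $U = \{u_3 \geq \sqrt{u_1^2+u_2^2}\}$ is unbounded in the $u_3$-direction, so the horizontal velocity $(\dot x, \dot y) = (u_1, u_2)$ is unconstrained in direction (only its norm must be dominated by $u_3$). Hence the projection of the system to the $(x,y)$-plane is completely controllable: from $\Id$ one may steer to an arbitrary $(x_1, y_1)$, e.g. along a straight segment with the minimal choice $u_3 = \sqrt{u_1^2+u_2^2}$, arriving at some point $(x_1, y_1, z^*)$. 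The only genuine constraint is therefore on the vertical coordinate $z$.

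Next I would analyze how a loop in the $(x,y)$-plane, appended to a trajectory at a fixed base point, changes $z$. Writing $\dot z = -\frac y2 u_1 + \frac x2 u_2 + \eps u_3$ and using left-invariance (a loop based at $q$ is $q$ times the same loop based at $\Id$, so its $z$-increment is base-point independent), the increment is $\Delta z = A + \eps\int u_3\, dt$, where $A = \oint(-\frac y2\,dx + \frac x2\,dy)$ is the signed area enclosed and $\int u_3\, dt \geq \int \sqrt{u_1^2+u_2^2}\,dt = L$ is at least the length $L$ of the loop. For a counterclockwise circle of radius $R$ one has $A = \pi R^2 > 0$ and $L = 2\pi R$, and $\Delta z$ then realizes every positive value (let $R\to 0$ and increase $u_3$). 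For a clockwise circle of radius $R$ with $u_3 = \sqrt{u_1^2+u_2^2}$ one gets $\Delta z = -\pi R^2 + 2\pi\eps R = -\pi R(R - 2\eps)$, which is $\leq 0$ for $R \geq 2\eps$ and decreases to $-\infty$ as $R\to\infty$. Thus an appended loop can shift $z$ by an arbitrary real amount.

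Combining the two steps gives global controllability: to reach an arbitrary $(x_1, y_1, z_1)$, first steer the horizontal projection to $(x_1,y_1)$, arriving at $(x_1,y_1,z^*)$, then append a clockwise or counterclockwise loop to move $z$ from $z^*$ to $z_1$. Hence $\A = M$. A periodic trajectory is the borderline case $R = 2\eps$: the clockwise circle of radius $2\eps$ through $q_0$ traversed with $u_3 = \sqrt{u_1^2+u_2^2}$ gives $\Delta z = -\pi(2\eps)^2 + 2\pi\eps(2\eps) = 0$, so it returns to $q_0$. This loop is lightlike, consistent with the boundary surface $S$ being filled by such trajectories.

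The step I expect to be the main obstacle, and the geometric heart of the matter, is showing that $z$ can be \emph{decreased}, i.e. that one can move against the drift: the term $\eps u_3$ in $\dot z$ is strictly positive away from $u = 0$, so naively $z$ can only grow. The resolution is that the signed-area term grows quadratically in the loop size while the unavoidable positive contribution $\eps\int u_3\,dt$ grows only linearly, so large clockwise loops produce a net decrease, with threshold exactly $R = 2\eps$. This is precisely where the relation $[X_1,X_2] = X_3$ enters: the cone is aligned with the commutant $\R X_3$, and loops generate vertical motion in both directions — the feature sharply distinguishing this family from the first, where the cone is transverse to the commutant and controllability fails (Lemma \ref{lem:x|y|}). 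As an alternative one could invoke Corollary~1 of \cite{sl_exist_pod} together with the observation that $C \cup C^-$ contains $\R X_3(q_0)$ in its interior, but the constructive argument has the advantage of exhibiting the periodic trajectory explicitly.
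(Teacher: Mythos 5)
Your proof is correct, but it follows a genuinely different route from the paper's. The paper argues in three steps: constant controls fill the solid cone $\{z \geq \eps\sqrt{x^2+y^2}\}$, hence in reverse time $\A^- \supset \{z \leq -\eps\sqrt{x^2+y^2}\}$; a two-phase piecewise-constant control whose first arc is long enough ($t_1 > 2\eps$) then reaches a point of $\intt \A^-$, and concatenating with a trajectory back to $q_0$ produces the periodic trajectory; finally, $q(t_2) \in \intt\A^-$ gives $q_0 \in \intt\A$ (local controllability at the identity), and global controllability follows from a general theorem on left-invariant systems on connected Lie groups (Theorem 2.7 of \cite{obzor}). You instead prove $\A = M$ directly: steer the planar projection to any $(x_1,y_1)$ (admissibility of any $(u_1,u_2)$ being ensured by choosing $u_3 \geq \sqrt{u_1^2+u_2^2}$), then correct $z$ by appending planar loops, exploiting the fact that the signed-area contribution to $\Delta z$ grows quadratically in the loop size while the unavoidable drift $\eps\int u_3\,dt \geq \eps L$ grows only linearly --- the same mechanism hidden in the paper's threshold $t_1 > 2\eps$. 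Your approach is more self-contained (no appeal to \cite{obzor}, whose role is replaced by the explicit loop construction) and yields a cleaner periodic trajectory, namely the closed lightlike clockwise circle of radius $2\eps$, versus the paper's three-phase concatenation whose closing arc is only shown to exist; the paper's route, in exchange, avoids the Green's-theorem bookkeeping and needs only two explicit arcs plus a citation. Two minor remarks: your counterclockwise-loop step is superfluous, since the admissible control $u = (0,0,1)$ already moves $z$ straight up by any positive amount; and your closing suggestion that Corollary~1 of \cite{sl_exist_pod} gives an alternative proof is off the mark --- that corollary is an existence result for length maximizers under the transversality condition $\tilde C \cap \R X_3(q_0) = \{0\}$, which is the opposite of the present geometry, and it says nothing about controllability.
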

\begin{proof}
Consider first a constant admissible control: $u = (u_1, u_2, u_3) \in U$. Then system \eq{pr31}--\eq{pr23} has the trajectory
\be{tr123}
x = u_1 t, \qquad y = u_2 t, \qquad z = \eps u_3 t.
\ee
Such trajectories fill the set $\{z \geq \eps \sqrt{x^2+y^2}\}$, thus
$$
\A \supset \{(x, y, z) \in M \mid z \geq \eps \sqrt{x^2+y^2}\}.
$$
Considering trajectories \eq{tr123} in reverse time, we get the inclusion
\be{incl-}
\A^- \supset \{(x, y, z) \in M \mid z \leq - \eps \sqrt{x^2+y^2}\}.
\ee

Now take arbitrary $0 < t_1 < t_2$ and consider the admissible control
\be{u(t)}
u(t) = 
\begin{cases}
(1, 0, 1), \qquad & t \in [0, t_1], \\
(0, -1, 1), \qquad & t \in (t_1, t_2].
\end{cases}
\ee
Immediate integration of system \eq{pr31}--\eq{pr23} gives the following:
\begin{align*}
&q(t_1) = (t_1, 0, \eps t_1), \\
&x(t_2) = t_1, \qquad y(t_2) = t_1 - t_2, \qquad z(t_2) = \eps t_2 + \left(\eps - \frac{t_1}{2}\right)(t_2-t_1).
\end{align*}
If $t_1 > 2 \eps$, then we have $\frac{dz}{dy} > \eps$ for $t \in (t_1, t_2]$, thus for sufficiently big $t_2$ we get the inclusion  
$q(t_2) \in \{z \leq - \eps \sqrt{x^2+y^2}\}$, whence with account of \eq{incl-} we get $q(t_2) \in \A^-$. Moreover, we can assume that $q(t_2) \in \intt \A^-$. Consequently, there exists $t_3 > t_2$ such that the control $u(\cdot)$ with conditions \eq{u(t)} and $u(t) =  u_3 \equiv \const \in U$ for $t \in (t_2, t_3]$ gives a periodic trajectory $q(t)$: $q(0) = q(t_3) = q_0$.

The inclusion $q(t_2) \in \intt \A^-$ implies that $q_0 \in \intt \A$, i.e., system \eq{pr31}--\eq{pr23} is locally controllable at the point $q_0$. Then Theorem 2.7 \cite{obzor} implies that this system is globally controllable as well.
\end{proof}

A periodic trajectory for system \eq{pr31}--\eq{pr23} is shown in Fig. \ref{fig:period}.

\onefiglabelsizen{period.pdf}{Periodic trajectory for system \eq{pr31}--\eq{pr23}}{fig:period}{6}

\subsection{Non-existence of optimal trajectories}
Theorem \ref{th:attain2} implies the following
\begin{corollary}
There are no optimal trajectories in problem \eq{pr31}--\eq{pr25}.
\end{corollary}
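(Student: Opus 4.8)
The plan is to show that the supremum of the length functional \eq{pr25} over admissible loops at $q_0$ equals $+\infty$; since no trajectory of finite length can attain an infinite supremum, this immediately rules out the existence of an optimal trajectory. The whole argument rests on Theorem \ref{th:attain2}: system \eq{pr31}--\eq{pr23} is globally controllable, and in particular $q_0 \in \intt \A$.

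First I would produce, for every $T > 0$, an admissible loop through $q_0$ whose length is at least $T$. Start with the constant timelike control $u \equiv (0, 0, 1) \in U$ on $[0, T]$: this is admissible since $u_3 = 1 > 0 = \sqrt{u_1^2 + u_2^2}$, it steers $q_0$ to the point $p_T = (0, 0, \eps T)$ along \eq{tr123}, and it contributes length $\int_0^T \sqrt{1 - 0 - 0}\,dt = T$. By global controllability there is an admissible trajectory returning $p_T$ to $q_0$; concatenating the two pieces gives an admissible loop at $q_0$.

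The key observation is that the length functional is nonnegative on admissible controls: the inclusion $u \in U$ forces $u_3 \geq \sqrt{u_1^2 + u_2^2}$, so $u_3^2 - u_1^2 - u_2^2 \geq 0$ and the integrand in \eq{pr25} is real and nonnegative. Hence the return segment contributes a nonnegative amount, and the total length of the loop is at least $T$. Letting $T \to +\infty$ shows that the supremum in \eq{pr25} over loops at $q_0$ is $+\infty$, so the maximum is never attained and no optimal trajectory exists.

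There is no serious obstacle here; the only point to check is that the concatenation of two admissible controls is again admissible, which is immediate because the constraint $u(t) \in U$ is imposed pointwise in $t$. Equivalently, one may phrase the argument through a single closed timelike curve: since $q_0 \in \intt \A$, there is a loop at $q_0$ of some positive length $l_0$, and traversing it $n$ times produces an admissible loop of length $n l_0 \to +\infty$, which yields the same conclusion.
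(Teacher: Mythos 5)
Your proposal is correct and takes essentially the same approach as the paper: both deduce non-existence from Theorem \ref{th:attain2} by exhibiting admissible loops at $q_0$ of arbitrarily large length, so that the supremum of \eq{pr25} is infinite and cannot be attained. The only difference is cosmetic --- the paper iterates the periodic trajectory with a nontrivial timelike arc constructed in the proof of that theorem, whereas your primary argument concatenates an explicit timelike segment of length $T$ with a return path supplied by global controllability (using nonnegativity of the integrand), a variant you yourself note is equivalent to the loop-iteration phrasing.
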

\begin{proof}
It follows from the proof of Theorem \ref{th:attain2} that system \eq{pr31}--\eq{pr23} has a periodic trajectory $q(\cdot)$ with a nontrivial timelike arc, thus $l(q(\cdot))>0$.   Thus for any point $q_1 \in M$ there exists an admissible trajectory of arbitrarily big length~$l$. So there are no optimal trajectories in problem \eq{pr31}--\eq{pr25}. 
\end{proof}

\subsection{Extremal trajectories}
\subsubsection{Normal trajectories}
It follows from the Pontryagin maximum principle \cite{PBGM, notes} that arclength para\-met\-ri\-sed normal extremals are trajectories of the Hamiltonian vector field $\vec H$ with the Hamiltonian $H = \frac 12 (h_1^2 + h_2^2 - \eps^2 h_3^2)$ lying at the level surface $\{H = - \frac 12\}$. The vertical subsystem of this system (for adjoint variables) has the form
\begin{align*}
&\dot h_1 = - h_2 h_3, \qquad \dot h_2 = h_1 h_3, \qquad \dot h_3 = 0, \\
&h_1^2+h_2^2 - \eps^2 h_3^2 = - 1, \qquad h_3 < 0.
\end{align*}
Thus
\begin{align*}
&h_1 = h_1^0 \cos \tau - h_2^0 \sin \tau, \qquad \tau = h_3t, \\
&h_2=h_2^0 \cos \tau + h_1^0 \sin \tau.
\end{align*}
The horizontal subsystem of the Hamiltonian system (for state variables) is
\begin{align*}
&\dot x = h_1, \\
&\dot y = h_2, \\
&\dot z = -h_1 \frac y2 + h_2 \frac x2 - \eps^2 h_3,
\end{align*}
whence
\begin{align}
&x = (h_1^0 \sin \tau + h_2^0 (\cos \tau - 1))/h_3, \label {xt2} \\
&y = (h_2^0 \sin \tau - h_1^0 (\cos \tau - 1))/h_3, \label{yt2} \\
&z = \frac{\eps^2h_3^2-1}{2h_3^2} (\tau - \sin \tau) - \eps^2 \tau. \label{zt2}
\end{align}

\subsubsection{Abnormal trajectories}
It follows from the Pontryagin maximum principle that optimal trajectories have the form \eq{xt2}, \eq{yt2}, 
$$
z = - \frac{\eps^2}{2}(\tau + \sin \tau).
$$

\subsection{Conjugate points}
Let us parametrise the surface $E = \{h_1^2+h_2^2-\eps^2h_3^2 = -1, \ h_3 < 0\}$ as follows:
\begin{align*}
&h_1 = \sinh \theta \cos \f, \qquad h_2 = \sinh \theta \sin \f, \qquad \eps h_3 = - \cosh \theta, \\
&\theta \in \R, \quad \f \in \R/(2 \pi \Z). 
\end{align*}
Normal extremal trajectories \eq{xt2}--\eq{zt2} define the exponential mapping 
$$
\map{\Exp}{(T_{q_0}^*M \cap E) \times \R_{+}}{M}, \qquad (\theta, \f, \tau) \mapsto (x, y, z).
$$
Conjugate points are zeros of the Jacobian $J = \frac{\partial (x, y, z)}{\partial (\theta, \f, \tau)}$. Immediate computation on the basis of formulas \eq{xt2}--\eq{zt2} gives
\begin{align*}
&J = - \eps^4 \frac{\sinh \theta}{\cosh^3 \theta} f(\theta, \tau), \\
&f(\theta, \tau) = \frac{2(1 - \cos \tau)}{\cosh^2 \theta} + \tau \sin \tau \tanh^2 \theta.
\end{align*}
Thus we have in the preimage of the exponential mapping
\be{conj_preim}
J = 0 \iff \theta = 0, \ \tau = 2 \pi n, \ n \in \N.
\ee
The corresponding points in the image of the exponential mapping are
\be{conj_im}
x = y = 0, \qquad z = 2 \pi n \eps^2, \quad n \in N 
\ee

We proved the following

\begin{proposition}\label{prop:conj2}
Conjugate points in problem \eq{pr31}--\eq{pr24} have the form \eq{conj_im}. The first conjugate time for an extremal trajectory $\Exp(\theta, \f, t)$ is $t = \frac{2 \pi}{|h_3|}$.
\end{proposition}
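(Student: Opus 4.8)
The plan is to rely on the standard characterisation that conjugate points are the critical values of the exponential mapping, that is, the images under $\Exp$ of those triples $(\theta,\f,\tau)$ at which the Jacobian $J=\partial(x,y,z)/\partial(\theta,\f,\tau)$ degenerates. The statement then splits into two tasks: first, to locate the zero set of $J$ in the preimage, and second, to push this set forward by the explicit formulas \eq{xt2}--\eq{zt2}.

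For the first task I would start from the factorisation of $J$ computed above, $J=-\eps^4(\sinh\theta/\cosh^3\theta)\,f(\theta,\tau)$. Since $\cosh\theta\geq 1$, the prefactor $\eps^4/\cosh^3\theta$ never vanishes, so $J=0$ exactly when $\sinh\theta=0$ or $f(\theta,\tau)=0$. The condition $\sinh\theta=0$ picks out the axis line $\theta=0$, where the coordinates $(\theta,\f)$ on the level surface $E$ degenerate (the initial covector ceases to depend on $\f$); this is a coordinate artifact rather than a genuine conjugate phenomenon, so I would treat the axis extremal $x=y=0$ separately, detecting its conjugate points from the refocusing of the neighbouring trajectories, which again reduces to the condition on $\tau$ below. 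The essential step is the sign analysis of $f$. Clearing the positive denominator, $\cosh^2\theta\,f(\theta,\tau)=2(1-\cos\tau)+\tau\sin\tau\,\sinh^2\theta$, and since this expression is even in $\tau$, it suffices to work with $\tau>0$. I would show it is strictly positive for $0<\tau<2\pi$ and vanishes precisely at the points $\tau=2\pi n$, which is exactly the characterisation \eq{conj_preim}.

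I expect this sign estimate to be the main obstacle: on $(0,\pi]$ both summands are nonnegative, but on $(\pi,2\pi)$ the term $\tau\sin\tau\,\sinh^2\theta$ is negative and grows without bound in $\theta$, so the crux is to verify that it never cancels $2(1-\cos\tau)$ before $\tau$ reaches $2\pi$, uniformly in $\theta$. This is where I would spend the bulk of the effort, isolating the coefficient $\sinh^2\theta$ and reducing the inequality to a one-variable estimate in $\tau$.

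Granting \eq{conj_preim}, both assertions follow by direct substitution. At $\tau=2\pi n$ one has $\cos\tau=1$ and $\sin\tau=0$, so \eq{xt2}--\eq{yt2} force $x=y=0$ irrespective of $\theta,\f$, while \eq{zt2} collapses to $z=2\pi n\eps^2$; this is precisely \eq{conj_im}, so every conjugate point lies on the $z$-axis at these heights. Finally, along a fixed extremal $\Exp(\theta,\f,\cdot)$ we have $\tau=h_3t$ with $\eps h_3=-\cosh\theta$, so $|\tau|=|h_3|\,t$; by the above, for $\theta\neq 0$ the smallest positive $t$ with $J=0$ satisfies $|h_3|\,t=2\pi$, giving $t=2\pi/|h_3|$, and the axis extremal yields the same first conjugate time through the refocusing computation, so the formula holds uniformly in $(\theta,\f)$.
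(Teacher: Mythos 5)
Your proposal follows exactly the paper's own route: factor the Jacobian as $J=-\eps^4\,(\sinh\theta/\cosh^3\theta)\,f(\theta,\tau)$, discard the factor $\sinh\theta$ as a polar-coordinate artifact on $E$, and reduce both assertions of Proposition~\ref{prop:conj2} to the sign claim that $\cosh^2\theta\, f(\theta,\tau)=2(1-\cos\tau)+\tau\sin\tau\,\sinh^2\theta$ is strictly positive for $0<|\tau|<2\pi$ and vanishes only at $|\tau|=2\pi n$. You correctly single this out as the crux and postpone it --- but the postponed estimate is \emph{false}, so the gap cannot be closed. At $\tau=3\pi/2$ the expression equals $2-\tfrac{3\pi}{2}\sinh^2\theta$, which vanishes at $\sinh^2\theta=\tfrac{4}{3\pi}$ and is negative for larger $\theta$. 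More generally, $f(\theta,\tau)=0$ with $\tau\in(\pi,2\pi)$ is equivalent to $\sinh^2\theta=2(1-\cos\tau)/(\tau|\sin\tau|)$, and the right-hand side is continuous on $(\pi,2\pi)$, tends to $+\infty$ as $\tau\to\pi^+$ and to $0$ as $\tau\to 2\pi^-$; hence \emph{every} extremal with $\theta\neq0$ has a root $\tau_1(\theta)\in(\pi,2\pi)$ of $f(\theta,\cdot)$ (the same holds for $\tau<0$, since $f$ is even in $\tau$). Where $\sinh\theta\neq0$ the variables $(\theta,\f,\tau)$ are regular coordinates, so these are genuine critical points of $\Exp$, and their images satisfy $x^2+y^2=2\eps^2\tanh^2\theta\,(1-\cos\tau)\neq 0$, i.e.\ they lie off the $z$-axis. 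Consequently there are conjugate points not of the form \eq{conj_im}, and the first conjugate time of every extremal with $\theta\neq0$ is strictly smaller than $2\pi/|h_3|$: both claims of the proposition fail for $\theta\neq0$. You should know that the paper's own proof is no better here: it simply asserts \eq{conj_preim} after displaying the factorization, and that assertion is precisely the false step you deferred.

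There is also a second, independent slip in your ``easy'' pushforward step. It is true that $\tau=2\pi n$ gives critical points --- indeed $f(\theta,2\pi n)=0$ for \emph{every} $\theta$, not only $\theta=0$, which already contradicts the literal ``iff'' in \eq{conj_preim} --- and that $x=y=0$ there. But your claim that \eq{zt2} ``collapses to $z=2\pi n\eps^2$ irrespective of $\theta$'' is wrong: substituting $|\tau|=2\pi n$ into \eq{zt2} and using $\eps^2h_3^2=\cosh^2\theta$ gives $|z|=\pi n\eps^2\left(1+1/\cosh^2\theta\right)$, which equals $2\pi n\eps^2$ only at $\theta=0$ and sweeps the segment $(\pi n\eps^2,\,2\pi n\eps^2]$ as $\theta$ ranges over $[0,\infty)$. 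What actually survives of the proposition is the axis case alone: for $\theta=0$, passing (as you propose) to regular coordinates replaces $J$ by $J/(\sinh\theta\cosh\theta)=-\eps^4 f(\theta,\tau)/\cosh^4\theta$, which at $\theta=0$ equals $-2\eps^4(1-\cos\tau)$ and first vanishes at $|\tau|=2\pi$, giving the conjugate point $(0,0,2\pi\eps^2)$ at time $t=2\pi/|h_3|=2\pi\eps$. For $\theta\neq0$ the first conjugate time is $\tau_1(\theta)/|h_3|$ with $\tau_1(\theta)\in(\pi,2\pi)$ as above ($\tau_1\to2\pi$ as $\theta\to0$, $\tau_1\to\pi$ as $\theta\to\infty$), so a correct statement must either be restricted to the extremals with $h_1^2+h_2^2=0$ or be reformulated to describe this genuinely two-dimensional conjugate locus.
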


\subsection{Application to general left-invariant problems on the Heisenberg group}

Let $\tilde g$ be a left-invariant Lorentzian structure on the Heisenberg group with the future cone
$$
\tilde C = \{v \in T_{q_0}M \mid \tilde g(v) \leq 0\}
$$
and the past cone
$
\tilde C^- = - \tilde C
$.
We say that a subspace $L \subset T_{q_0}M$ is contained inside of the future and past cones if
$$
L \setminus \{0\} \subset \intt (\tilde C \cup \tilde C^-).
$$

Recall that the commutant of the Heisenberg algebra is $\R X_3(q_0)$.

\begin{theorem}
\begin{itemize}
\item[$(1)$]
If $\tilde C \cap (\R X_3(q_0))  = \{0\}$, then the Lorentzian structure $\tilde g$ has length maximizers between any points $q_0$ and $q_1$ such that $q_0$ can be connected by nonspacelike trajectory with $q_1$. 
\item[$(2)$]
If $\R X_3(q_0)$ is contained inside of the future and past cones of $\tilde g$ then the Lorentzian structure $\tilde g$ has periodic nonspacelike trajectories, is globally controllable and  has no length maximizers.
\end{itemize}
\end{theorem}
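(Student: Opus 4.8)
The two hypotheses are genuinely complementary: condition~$(1)$ forces $X_3(q_0)$ to be \emph{spacelike} for $\tilde g$ (since it meets neither $\tilde C$ nor $\tilde C^- = -\tilde C$ away from the origin), whereas condition~$(2)$ forces both $X_3(q_0)$ and $-X_3(q_0)$ to be \emph{timelike}. Item~$(1)$ therefore abstracts the existence result for the first family (Theorem~\ref{th:exist}), while item~$(2)$ abstracts the global controllability and non-existence phenomena of the second family (Theorem~\ref{th:attain2} and its corollary). My plan is to prove each item by lifting the corresponding concrete argument to an arbitrary admissible cone, keeping track only of the geometric position of $X_3(q_0)$ relative to $\tilde C$.

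For item~$(1)$ the cleanest route is to invoke Corollary~1 of~\cite{sl_exist_pod}: its hypothesis is exactly that the future cone meets the commutant $\R X_3(q_0)$ only at the origin, which is our assumption, and its conclusion is the existence of length maximizers between causally related points. For a self-contained proof I would instead verify the three conditions of Theorem~\ref{th:sl-exist} as in the proof of Theorem~\ref{th:exist}. Condition~1 is the assumed causal relation of $q_0$ and $q_1$. For condition~3 I would use that a pointed convex cone admits a linear functional positive on it, which along every nonspacelike trajectory yields a monotone time coordinate bounding $t_1$ in terms of $q_1$; pointedness of $\tilde C$ is guaranteed precisely because $X_3(q_0)$ is spacelike. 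Condition~2 (compactness of the causal diamond $\A \cap \A^-_{q_1}$) then follows because a pointed future cone forces $\A$ and $\A^-_{q_1}$ to close up into a bounded set.

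For item~$(2)$ I would reproduce the construction in the proof of Theorem~\ref{th:attain2} at the level of velocity vectors. Since $X_3(q_0) \in \intt \tilde C$, there is $\delta > 0$ for which $X_3 + \delta X_1$ and $X_3 - \delta X_2$ are future-timelike; concatenating the constant velocities $\dot q = X_3 + \delta X_1$ on $[0, T_1]$ and $\dot q = X_3 - \delta X_2$ on $(T_1, T_2]$ and integrating the Heisenberg dynamics $\dot z = -a_1 y/2 + a_2 x/2 + a_3$ produces an endpoint $q_*$ whose $z$-coordinate falls below any prescribed multiple of $\sqrt{x^2+y^2}$ once $T_1$ and then $T_2$ are taken large. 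Dually, $-X_3(q_0) \in \intt \tilde C^-$ makes $\A^-$ contain a downward solid cone, so for suitable $T_1, T_2$ one gets $q_* \in \intt \A^-$. As in Theorem~\ref{th:attain2}, this forces $q_0 \in \intt \A$, whence global controllability follows from Theorem~2.7 of~\cite{obzor}. Closing the timelike arc $q_0 \to q_*$ by a return arc $q_* \to q_0$ (available since $q_0 \in \intt \A_{q_*}$) yields a periodic nonspacelike trajectory of positive length; prepending $n$ copies of it to any trajectory reaching a given $q_1$ shows the supremum of lengths from $q_0$ to $q_1$ equals $+\infty$, so no length maximizers exist.

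The main obstacle lies entirely in item~$(2)$: for a general, possibly non-symmetric, Lorentzian cone I must check that the explicit descending trajectory really enters $\intt \A^-$ and not merely $\partial \A^-$. This amounts to quantifying the aperture of the downward cone inside $\A^-$ (governed by the largest $\delta$ with $-X_3 + \delta w \in \tilde C^-$ for unit horizontal $w$) and comparing it against the asymptotic slope $z/\sqrt{x^2+y^2}$ of the constructed endpoint as $T_1, T_2 \to \infty$. The computation in the proof of Theorem~\ref{th:attain2} is the exact model; the only new feature is that both apertures are now controlled by the abstract condition $\pm X_3(q_0) \in \intt \tilde C^{\pm}$ rather than by the explicit orthonormal frame.
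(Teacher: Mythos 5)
Your proposal is correct, and for item (1) it coincides with the paper's proof: both rest entirely on Corollary~1 of \cite{sl_exist_pod}. For item (2) you prove the right things through the right chain --- a two-arc timelike construction landing in $\intt \A^-$, hence $q_0 \in \intt \A$, global controllability by Theorem~2.7 of \cite{obzor}, and non-existence of maximizers by concatenating positive-length loops --- but you re-derive all of this for the abstract cone, whereas the paper uses a pure inheritance argument: since $X_3(q_0) \in \intt \tilde C$, one can choose the parameter $\eps$ so that the admissible velocity cone of system \eq{pr31}, \eq{pr22} is contained in $\tilde C$; then every admissible trajectory of that system is nonspacelike for $\tilde g$ (and its timelike arcs are $\tilde g$-timelike), so the periodic trajectory, global controllability, and non-existence of maximizers already established in Theorem~\ref{th:attain2} and its corollary transfer to $\tilde g$ with no new computation. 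Your $\delta$ is exactly $1/\eps$ under this translation, and the aperture comparison you single out as ``the main obstacle'' is precisely the step the reduction renders unnecessary, because it was already carried out in the proof of Theorem~\ref{th:attain2}. What your longer route buys is self-containedness; incidentally it also sidesteps a small slip in the paper's wording, which asks for ``sufficiently small $\eps$'' although the cone of system \eq{pr31}, \eq{pr22} widens as $\eps \to 0$ and shrinks to the ray $\R_+ X_3(q_0)$ as $\eps \to \infty$, so the required containment holds for $\eps$ sufficiently \emph{large}.

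Two remarks on the optional ``self-contained'' variant you sketch for item (1). Compactness of $\A \cap \A_{q_1}^-$ (condition 2 of Theorem~\ref{th:sl-exist}) does not follow softly from pointedness of the cone; for the Heisenberg group it is essentially the content of the cited corollary, and even in the concrete case of Theorem~\ref{th:exist} it required the explicit description of both sets. Likewise, the positive linear functional you invoke for condition 3 must come from an \emph{exact} left-invariant one-form (a combination of $dx$ and $dy$) for the monotonicity argument to make sense along trajectories; such a form exists because hypothesis (1) forces the projection of $\tilde C$ along $X_3(q_0)$ to be a pointed plane cone, but as stated this step is a gap. Since your primary route for (1) is the citation, neither point affects the correctness of the proposal as a whole.
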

\begin{proof}
(1) follows immediately from Corollary~1 \cite{sl_exist_pod}.

(2) For sufficiently small $\eps > 0$ the future cone of the system \eq{pr31}--\eq{pr23} is contained in the future cone $\tilde C$ of the structure $\tilde g$, thus any trajectory of system \eq{pr31}--\eq{pr23} is a nonspacelike curve of structure $\tilde g$. The system \eq{pr31}--\eq{pr23} has a periodic admissible trajectory, is globally controllable, and has no length maximizers. Thus the Lorentzian structure  $\tilde g$ inherits these properties.
\end{proof}

\end{document}